\renewcommand\eqref[1]{(\ref{#1})} 
\numberwithin{equation}{section}
\theoremstyle{plain}
\newtheorem{thm}{Theorem}[section]
\newtheorem{prop}[thm]{Proposition}
\newtheorem{cor}[thm]{Corollary}
\theoremstyle{definition}
\newtheorem{defn}[thm]{Definition}
\newtheorem{rem}[thm]{Remark}
\renewcommand{\wp}{\mathfrak S}
\newcommand{\Rn}{\mathbb R^{n}}
\begin{document}

   \title[Critical Hardy inequalities]
   {Critical Hardy inequalities}

\author[Michael Ruzhansky]{Michael Ruzhansky}
\address{
  Michael Ruzhansky:
  \endgraf
  Department of Mathematics
  \endgraf
  Imperial College London
  \endgraf
  180 Queen's Gate, London SW7 2AZ
  \endgraf
  United Kingdom
  \endgraf
  {\it E-mail address} {\rm m.ruzhansky@imperial.ac.uk}
  }
\author[Durvudkhan Suragan]{Durvudkhan Suragan}
\address{
  Durvudkhan Suragan:
  \endgraf
  Institute of Mathematics and Mathematical Modelling
  \endgraf
  125 Pushkin str.
  \endgraf
  050010 Almaty
  \endgraf
  Kazakhstan
  \endgraf
  and
  \endgraf
  Department of Mathematics
  \endgraf
  Imperial College London
  \endgraf
  180 Queen's Gate, London SW7 2AZ
  \endgraf
  United Kingdom
  \endgraf
  {\it E-mail address} {\rm d.suragan@imperial.ac.uk}
  }

\thanks{The authors were supported in parts by the EPSRC
 grant EP/K039407/1 and by the Leverhulme Grant RPG-2014-02,
 as well as by the MESRK grant 5127/GF4. No new data was collected or generated during 
 the course of research.}

     \keywords{Critical Hardy inequality, homogeneous Lie group,
     uncertainty principle}
     \subjclass[2010]{22E30, 43A80}

     \begin{abstract}
     We prove a range of critical Hardy inequalities and
     uncertainty type principles on one of most general subclasses
     of nilpotent Lie groups, namely the class of homogeneous groups.
     Moreover, we establish a new type of critical Hardy inequality 
     and prove Hardy-Sobolev type inequalities. Most of the obtained estimates 
     are new already for the case of $\mathbb R^{n}$. 
     For example, for any $f\in C_{0}^{\infty}(\mathbb{R}^{n}\backslash\{0\})$ our results imply the range of critical Hardy inequalities of the form
     $$
\qquad  \underset{R>0}{\sup}\left\|\frac{f-f_{R}}{|x|^{\frac{n}{p}}{\log}\frac{R}{|x|}}
\right\|_{L^{p}(\mathbb{R}^{n})}\leq \frac{p}{p-1}\left\| \frac{1}{|x|^{\frac{n}{p}-1}} \nabla f\right\|_{L^{p}(\mathbb{R}^{n})},\quad 1<p<\infty,
$$
where $f_{R}=f(R\frac{x}{|x|})$, with sharp constant $\frac{p}{p-1}$, recovering the known cases of $p=n$ and $p=2$. Moreover, our results also imply a new type of a critical Hardy inequality of the form
$$\left\|\frac{f}{|x|}\right\|_{L^{n}(\mathbb{R}^{n})}
	\leq n
	\left\|(\log|x|)\nabla f\right\|_{L^{n}(\mathbb{R}^{n})},
$$
	for all $f\in C_{0}^{\infty}(\mathbb{R}^{n}\backslash\{0\}),$ where the constant $n$ is sharp.
	However, homogeneous groups provide a perfect degree of generality to talk about such estimates without using specific properties of $\mathbb R^n$ or of the Euclidean distance.
  \end{abstract}

     \maketitle

\section{Introduction}

In a very short note \cite{Hardy:1920} related to the convergence of different series related to an inequality by D. Hilbert, G. H. Hardy proved an inequality (in one variable) now bearing his name, which in the multidimensional case of $\Rn$ can be formulated as

\begin{equation}\label{HRn}
\left\|\frac{f(x)}{|x|}\right\|_{L^{2}(\Rn)} \leq \frac{2}{n-2}\left\| \nabla f\right\|_{L^{2}(\Rn)},\quad n\geq 3,
\end{equation}
where $\nabla$ is the standard gradient in $\mathbb{R}^{n}$, $f\in C_{0}^{\infty}(\mathbb{R}^{n}\backslash \{0\})$,
and the constant $\frac{2}{n-2}$ is known to be sharp.
Nowadays this inequality plays an important role in many areas such as the spectral theory, geometric estimates, the theory of partial differential equations, associated to the Laplacian,
see e.g. \cite{Davies}, \cite{Brez2} or \cite{BEL} for reviews of the subject.
In the modern analysis of the $p$-Laplacian and for many other problems, the $L^{p}$-version of \eqref{HRn} takes the form
\begin{equation}\label{HRn-p}
\left\|\frac{f(x)}{|x|}\right\|_{L^{p}(\Rn)} \leq \frac{p}{n-p}\left\| \nabla f\right\|_{L^{p}(\Rn)},\quad
n\geq 2,\; 1\leq p<n,
\end{equation}
again with the sharp constant $\frac{p}{n-p}$.

We note historically, that for sequences \eqref{HRn-p} was first given as an analogue of Hardy's original version of \eqref{HRn} in an immediate response by Marcel Riesz to Hardy's demonstration of his inequality to Riesz, see \cite{Hardy:1920} for an account of this. 

In the critical case $p=n$ the inequality \eqref{HRn-p} fails for any constant. In bounded domains, however, the inequality
\begin{equation}\label{HRn-cr}
\left\|\frac{f(x)}{|x|(1+\log\frac{1}{|x|})}\right\|_{L^{n}(B)} \leq \frac{n}{n-1}\left\| \nabla f\right\|_{L^{n}(B)},\quad
n\geq 2,
\end{equation}
is valid, with sharp constant $\frac{n}{n-1}$, see Edmunds and Triebel
\cite{ET-1999}, where such inequality was also shown to be equivalent to the critical case of the Sobolev-Lorentz inequality. However, \eqref{HRn-cr} is not invariant under scalings in the same way as \eqref{HRn} and \eqref{HRn-p} are, and in Ioku, Ishiwata and Ozawa \cite{IIO} a global scaling invariant version of
\eqref{HRn-cr} was established:
\begin{equation}\label{HRn-sc}
\left\|\frac{f-f_{R}}{|x|\log\frac{R}{|x|}}\right\|_{L^{n}(\Rn)} \leq \frac{n}{n-1}\left\| \frac{x}{|x|}\cdot\nabla f\right\|_{L^{n}(\Rn)},\quad
n\geq 2,
\end{equation}
again with sharp constant $\frac{n}{n-1}$, $f_{R}=f(R\frac{x}{|x|})$.
We refer also to \cite{II} for the discussion of \eqref{HRn-cr} and \eqref{HRn-sc} in bounded domains. 
In the case of $\mathbb R^{n}$, we extend this inequality to the whole range of critical Hardy inequalities of the form 
\begin{multline}\label{LH2-Rn}
\qquad  \underset{R>0}{\sup}\left\|\frac{f-f_{R}}{|x|^{\frac{n}{p}}{\log}\frac{R}{|x|}}
\right\|_{L^{p}(\mathbb{R}^{n})}\leq
\frac{p}{p-1}\left\| \frac{1}{|x|^{\frac{n}{p}-1}} \frac{x}{|x|}\cdot\nabla f\right\|_{L^{p}(\mathbb{R}^{n})}
\\
\leq \frac{p}{p-1}\left\| \frac{1}{|x|^{\frac{n}{p}-1}} \nabla f\right\|_{L^{p}(\mathbb{R}^{n})},\quad 1<p<\infty,
\end{multline}
with the sharp constant.
In addition, among other things, compared to \eqref{HRn-cr}, we show another type of critical Hardy inequality, namely,
\begin{equation}\label{CKNl}
	\left\|\frac{f}{|x|}\right\|_{L^{n}(\mathbb{R}^{n})}
	\leq n
	\left\|\log|x|\nabla f\right\|_{L^{n}(\mathbb{R}^{n})},
	\end{equation}
	for all $f\in C_{0}^{\infty}(\mathbb{R}^{n}\backslash\{0\}),$ where the constant is sharp.

In the case of 
homogeneous Carnot groups inequalities of this type have been also intensively investigated. In this case inequality \eqref{HRn} takes the form
\begin{equation}\label{HG}
\left\|\frac{f(x)}{d(x)}\right\|_{L^{2}(\mathbb G)} \leq \frac{2}{Q-2}\left\| \nabla_{H} f\right\|_{L^{2}(\mathbb G)},\quad Q\geq 3,
\end{equation}
where $Q$ is the homogeneous dimension of the homogeneous Carnot group $\mathbb G$, $\nabla_{H}$ is the horizontal gradient, and $d(x)$ is the so-called $\mathcal L$-gauge (a particular quasi-norm) obtained from the fundamental solution of the sub-Laplacian: $d(x)$ is such that $d(x)^{2-Q}$ is a constant multiple of Folland's \cite{Folland-FS} fundamental solution of $\mathcal L$.
Thus, in the case of the Heisenberg group \eqref{HG} was proved by Garofalo and Lanconelli
\cite{GL}, and its extension to $p\not=2$ was obtained by Niu, Zhang and Wang \cite{NZW-Hardy-p}, see also \cite{Adimurthi-Sekar}.
On general homogeneous Carnot groups weighted versions of \eqref{HG} are also known, see
Goldstein and Kombe \cite{GolKom}, and its further refinements including boundary terms over arbitrary domains were obtained by the authors in \cite{Ruzhansky-Suragan:Layers}.
Further remainder term formulae and horizontal versions of Hardy type inequalities can be found in \cite{Ruzhansky-Suragan:L2-CKN} and \cite{Ruzhansky-Suragan:horizontal}. We also shall note that the studies of the remainder terms in Hardy type inequalities has a long history, initiated by Brezis and Nirenberg in \cite{Brez3} and then Brezis and Lieb \cite{Brez1} for Hardy-Sobolev inequalities, Brezis and V\'azquez in \cite[Section 4]{Brez4}, with many subsequent works in this subject.

The $L^{p}$-versions of \eqref{HG} analogous to \eqref{HRn-p} in the form
\begin{equation}\label{HG-p}
\left\|\frac{f(x)}{d(x)}\right\|_{L^{p}(\mathbb G)} \leq \frac{p}{Q-p}\left\| \nabla_{H} f\right\|_{L^{p}(\mathbb G)},\quad Q\geq 3,\; 1<p<Q,
\end{equation}
are also known: on groups of Heisenberg type \cite{DGP-Hardy-potanal}, on polarisable groups \cite{GolKom}, see also \cite{DAmbrosio-Hardy}, and on general Carnot groups
\cite{Jin-Shen:Hardy-Rellich-AM-2011,Lian:Rellich}, see also \cite{Ciatti-Cowling-Ricci}.

In this paper we establish the global version of the critical Hardy inequality \eqref{HRn-cr}-\eqref{HRn-sc} on general homogeneous groups.
Homogeneous groups are Lie groups equipped with a family of
dilations compatible with the group law.
The abelian group $(\mathbb{R}^{n}; +)$, the Heisenberg group, homogeneous Carnot groups, stratified Lie groups, graded Lie groups are all special cases of the homogeneous groups. Homogeneous groups have proved to be a
natural setting to generalise many questions of the Euclidean analysis.
Indeed, having both the group and dilation structures allows one to introduce
many notions coming from the Euclidean analysis.
We note that homogeneous groups are nilpotent, and
the class of homogeneous groups gives almost the class of all nilpotent Lie groups but is not equal to it; an example of a nine-dimensional nilpotent Lie group that does not allow for any family of dilations was constructed by Dyer \cite{Dyer-1970}.
Thus, our methods of proofs are not specific to $\mathbb R^{n}$ and so it is convenient to work in the generality of homogeneous groups. However, already in the case of isotropic $\mathbb R^{n}$ some of our estimates are new while other results also provide new insights because of the freedom of the choice of a quasi-norm $|\cdot|$ which does not have to be the Euclidean norm.

Thus, in particular, we show that for a homogeneous group $\mathbb{G}$ of homogeneous dimension $Q\geq 2$ and any homogeneous quasi-norm $|\cdot|$
we have the inequality
\begin{equation}\label{iLH20}
\underset{R>0}{\sup}\left\|\frac{f-f_{R}}{|x|{\log}\frac{R}{|x|}}
\right\|_{L^{Q}(\mathbb{G})}\leq
\frac{Q}{Q-1}\left\|\mathcal{R} f\right\|_{L^{Q}(\mathbb{G})}
\end{equation}
for all $f\in C_{0}^{\infty}(\mathbb{G}\backslash \{0\})$, where $\mathcal{R}:=\frac{d}{d|x|}$ is the radial derivative.
Here we denote
$f_{R}=f(R\frac{x}{|x|})$ for $x\in\mathbb{G}$ and $R>0$,
$\nabla=(X_{1},\ldots,X_{n})$, where $\{X_{1},\ldots,X_{n}\}$ is a basis of the Lie algebra $\mathfrak{g}$ of $\mathbb{G}$, $A$ is a $n$-diagonal matrix
\begin{equation}\label{EQ:mA}
A={\rm diag}(\nu_{1},\ldots,\nu_{n}),
\end{equation}
where $\nu_{k}$ is the homogeneous degree of $X_{k}.$
The operator $\mathcal{R}$ appearing in
\eqref{iLH20} is homogeneous of order $-1$ and can be interpreted at the radial operator on the homogeneous group $\mathbb G$.

In fact, we will show more than the inequality \eqref{iLH20}.
Namely, we will show the whole range of inequalities
\begin{equation}\label{LH2p}
\qquad  \underset{R>0}{\sup}\left\|\frac{f-f_{R}}{|x|^{\frac{Q}{p}}{\log}\frac{R}{|x|}}
\right\|_{L^{p}(\mathbb{G})}\leq
\frac{p}{p-1}\left\| \frac{1}{|x|^{\frac{Q}{p}-1}}\mathcal{R} f\right\|_{L^{p}(\mathbb{G})},
\end{equation}
for all $1< p<\infty$, where $\mathcal{R}=\frac{d}{d|x|}$.  For $p=Q$ this is the same as \eqref{iLH20}. For $p=2$, in the abelian case $G=\mathbb R^{n}$ the estimate \eqref{LH2p} was shown by Machihara, Ozawa and Wadade in \cite{MOW:Hardy-Hayashi}. The abelian case of \eqref{LH2p} in principle can be obtained from \cite[Theorem 1.1]{MOW:Sobolev-Lorenz-Zygmund}, when $|\cdot|$ is the Euclidean norm.

Consequently, \eqref{LH2p} implies critical versions of uncertainty principles on homogeneous groups that will be given in Corollary \ref{Luncertainty}.

The inequalities \eqref{LH2p} are all critical with respect to the weight in the left hand side, namely, all the weights $|x|^{-\frac{Q}{p}}$ in $L^{p}$ give $|x|^{-Q}$ which is the critical order for the integrability at zero and at infinity.

The constant $\frac{p}{p-1}$ in \eqref{LH2p} is sharp but is in general unattainable (see \cite{II,IIO} for the Euclidean case). In fact, the proof of Theorem \ref{CHardy} yields also the precise form of the remainder, i.e. of the difference between two terms in \eqref{LH2p}.

One can readily see that the inequality \eqref{iLH20} extends the critical Hardy inequality \eqref{HRn-sc}.
Indeed, in the case of $\mathbb G=\mathbb{R}^{n}$,
we have $Q=n$ and if $|x|$ is the Euclidean norm, \eqref{iLH20}
reduces to \eqref{HRn-sc}.
Our proof of \eqref{LH2p} is based on the Euclidean ideas \cite{IIO} and \cite{MOW:Sobolev-Lorenz-Zygmund} combined with further analysis on homogeneous groups developed by Folland and Stein \cite{FS-Hardy}, and later in \cite{FR}.

Since we are dealing with general homogeneous groups there may be no gradation nor stratification available in its Lie algebra - therefore, the appearance of the full gradient
$\nabla=(X_{1},\ldots,X_{n})$ in \eqref{iLH20} or \eqref{LH2p} is rather natural. Moreover, we note that we do not have to rely on a particular gauge $d(x)$ as in \eqref{HG} and \eqref{HG-p}, but can use any homogeneous quasi-norm $|x|$ in \eqref{iLH20} and \eqref{LH2p}.
In fact, also in the stratified case, one does not always need to work with the gauge (a particular norm) coming from the fundamental solution, and we refer to \cite{Ruzhansky-Suragan:horizontal} for critical Hardy and other inequalities in the horizontal setting of stratified groups. We also refer to \cite{Ruzhansky-Suragan:squares} for Hardy and Rellich inequalities with boundary terms for general H\"ormander's sums of squares.

We also note that \eqref{iLH20} is the critical case of $L^{p}$-Hardy inequalities on homogeneous groups that were established in \cite{Ruzhansky-Suragan:Hardy-hom-Lp}, taking the form
\begin{equation}\label{EQ:Hardy-Lp}
\left\|\frac{f}{|x|}
\right\|_{L^{p}(\mathbb{G})}\leq
\frac{p}{Q-p}\left\|\mathcal{R} f\right\|_{L^{p}(\mathbb{G})},
\quad 1<p<Q.
\end{equation}

This inequality can be regarded as an extension of \eqref{HRn-p} and \eqref{HG-p} to the setting of homogeneous groups, we refer to \cite{Ruzhansky-Suragan:Hardy-hom-Lp} for explanations. From this point of view the inequality \eqref{iLH20} is the critical case of \eqref{EQ:Hardy-Lp} with $p=Q$ in the same way as \eqref{HRn-sc} is the critical case of \eqref{HRn-p} with $p=n$.

Thus, the main aim of this paper is to obtain a critical Hardy inequality on
homogeneous groups generalising the known critical Hardy inequality of the Euclidean case as well as critical Hardy inequalities that would be already new also on $\mathbb R^{n}$. Moreover, such inequalities will also imply the corresponding versions of critical uncertainty principles on homogeneous groups. 

Furthermore, we establish a new type of critical Hardy inequality on general homogeneous groups, and also give improved versions of the inequality \eqref{HRn-p} for $p=2$ on quasi-balls of homogeneous (Lie) groups. This also generalises many previously known results on subclasses of nilpotent Lie groups. Such inequalities are also called Hardy-Sobolev type inequalities.

In fact, the Euclidean case of our homogeneous group results says that for each $f\in C_{0}^{\infty}(B(0,R)\backslash\{0\}), \,B(0,R)=\{x\in\Rn,\quad |x|<R\},$ we have
\begin{equation}\label{IntroCritLQHardy}
\int_{B(0,R)}\frac{|f(x)|^{n}}{|x|^{n}}dx \leq n^{n}\int_{B(0,R)}|\log|x||^{n}|\nabla f|^{n}dx,
\end{equation}
with the optimal constant, which can serve as a new critical version of the Hardy inequality \eqref{HRn-p} (see also \eqref{CKNl}), while it is also generalised in Section \ref{Sec4} to arbitrary  quasi-balls of homogeneous (Lie) groups.

In Section \ref{SEC:2} we very briefly review the main concepts of homogeneous
groups and fix the notation. In Section \ref{Sec3} we prove critical Hardy inequalities and uncertainty type principles. In Section \ref{Sec2} a new critical Hardy inequality is given.
In Section \ref{Sec4} we discuss a class of improved Hardy-Sobolev type inequalities on quasi-balls of homogeneous (Lie) groups.

\section{Preliminaries}
\label{SEC:2}

Following Folland and Stein \cite{FS-Hardy} we briefly recall the main notions concerning homogeneous groups. We adopt the notation from \cite{FR} and refer to it for further details.

A family of dilations of a Lie algebra $\mathfrak{g}$
is a family of linear mappings of the form
$$D_{\lambda}={\rm Exp}(A {\rm ln}\lambda)=\sum_{k=0}^{\infty}
\frac{1}{k!}({\rm ln}(\lambda) A)^{k},$$
where $A$ is a diagonalisable linear operator on $\mathfrak{g}$
with positive eigenvalues,
and each $D_{\lambda}$ is a morphism of the Lie algebra $\mathfrak{g}$,
that is, a linear mapping
from $\mathfrak{g}$ to itself which respects the Lie bracket:
$$\forall X,Y\in \mathfrak{g},\, \lambda>0,\;
[D_{\lambda}X, D_{\lambda}Y]=D_{\lambda}[X,Y].$$

\begin{defn}\label{DEF:hom}
A homogeneous group is a connected simply connected Lie group whose
Lie algebra is equipped with dilations.
\end{defn}

Homogeneous groups are nilpotent, the exponential mapping is a global diffeomorphism from $\mathfrak g$ to $\mathbb G$, thus leading to the dilations on $\mathbb G$ which we continue to denote by $D_{\lambda}x$ or simply by $\lambda x$.

If $x$ denotes a point in $\mathbb{G}$ the Haar measure
is denoted by
$dx$. The Haar measure of
a measurable subset $S$ of $\mathbb{G}$ is
denoted by $|S|$.
One easily checks that
\begin{equation}
|D_{\lambda}(S)|=\lambda^{Q}|S| \quad {\rm and}\quad \int_{\mathbb{G}}f(\lambda x)
dx=\lambda^{-Q}\int_{\mathbb{G}}f(x)dx,
\end{equation}
where
$$Q = {\rm Tr}\,A.$$
The number $Q$ is larger (or equal) than the usual (topological) dimension of the group:
$$n = {\rm dim} \mathbb{G} \leq Q,$$
and may replace it for certain questions of analysis.
For this reason the number $Q$
is called the homogeneous dimension of $\mathbb{G}$.

\begin{defn}\label{DEF:norm} A homogeneous quasi-norm is
a continuous non-negative function
$$\mathbb{G}\ni x\mapsto |x|\in [0,\infty),$$
satisfying

\begin{itemize}
\item  (symmetric) $|x^{-1}| = |x|$ for all $x\in \mathbb{G}$,
\item (1-homogeneous) $|\lambda x|=\lambda |x|$ for all
$x\in \mathbb{G}$ and $\lambda >0$,
\item (definite) $|x|= 0$ if and only if $x=0$.
\end{itemize}

\end{defn}

Every homogeneous group $\mathbb{G}$ admits a homogeneous
quasi-norm that is smooth away from the unit element but we do not need it here.
The $|\cdot|$-ball centred at $x\in\mathbb{G}$ with radius $R > 0$
can be defined by
$$B(x,R):=\{y\in \mathbb{G}: |x^{-1}y|<R\}.$$
We also use notation
$$B^{c}(x,R):=\{y\in \mathbb{G}: |x^{-1}y|>R\}.$$
The following polar decomposition was established in \cite{FS-Hardy}
(see also \cite[Section 3.1.7]{FR}).

\begin{prop}\label{polarinteg}
Let $\mathbb{G}$
be a homogeneous group equipped with a homogeneous
quasi-norm $\mid\cdot\mid$. Then there is a (unique)
positive Borel measure $\sigma$ on the
unit sphere
\begin{equation}\label{EQ:sphere1}\wp:=\{x\in \mathbb{G}:\,|x|=1\},\end{equation}
such that for all $f\in L^{1}(\mathbb{G})$, we have
\begin{equation}\label{EQ:sphere}
\int_{\mathbb{G}}f(x)dx=\int_{0}^{\infty}
\int_{\wp}f(ry)r^{Q-1}d\sigma(y)dr.
\end{equation}
\end{prop}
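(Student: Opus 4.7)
The plan is to construct $\sigma$ via a Riesz representation argument driven by the scaling of Haar measure, then verify the integral identity on a dense class of functions. The polar coordinates map $\Phi\colon(0,\infty)\times\wp\to\mathbb G\setminus\{0\}$, $\Phi(r,y)=D_r y$, is a homeomorphism: $|D_r y|=r$ by $1$-homogeneity of $|\cdot|$, so $\Phi$ has continuous inverse $x\mapsto(|x|,D_{1/|x|}x)$, and every $x\ne 0$ admits a unique polar representation. I use throughout the standard fact that $\wp$ is compact.

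To construct $\sigma$, for $h\in C(\wp)$ and $0<a<b$ define
$$I_{a,b}(h):=\int_{\mathbb G}\chi_{\{a<|x|<b\}}(x)\,h(D_{1/|x|}x)\,|x|^{-Q}\,dx.$$
The integrand is $(-Q)$-homogeneous under $D_\lambda$, since $\chi_{\{a<|x|<b\}}$ transforms into $\chi_{\{a/\lambda<|x|<b/\lambda\}}$, $h(D_{1/|x|}x)$ is $0$-homogeneous, and $|x|^{-Q}$ scales by $\lambda^{-Q}$. Applying the Haar scaling $\int F(D_\lambda x)\,dx=\lambda^{-Q}\int F\,dx$ yields $I_{a,b}(h)=I_{\lambda a,\lambda b}(h)$, so $I_{a,b}(h)$ depends only on the ratio $b/a$. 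Combined with the obvious additivity $I_{a,b}+I_{b,c}=I_{a,c}$, Cauchy's functional equation in logarithmic coordinates forces
$$I_{a,b}(h)=\log(b/a)\,\Lambda(h)$$
for a positive linear functional $\Lambda$ on $C(\wp)$. Riesz representation on the compact space $\wp$ then yields a unique positive Borel measure $\sigma$ with $\Lambda(h)=\int_\wp h\,d\sigma$.

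To verify \eqref{EQ:sphere}, consider first separated-variable $f(x)=g(|x|)h(D_{1/|x|}x)$ with $g\in C_c(0,\infty)$, $h\in C(\wp)$. Writing $g(r)=r^{-Q}\tilde g(r)$ with $\tilde g(r)=r^Q g(r)\in C_c(0,\infty)$ and approximating $\tilde g$ by step functions $\sum_j c_j\chi_{(a_j,b_j)}$, the formula for $I_{a_j,b_j}(h)$ gives that the left-hand side of \eqref{EQ:sphere} equals $\sum_j c_j\log(b_j/a_j)\int h\,d\sigma$, which also equals the right-hand side since $\int_0^\infty g(r)\, r^{Q-1}\,dr=\int_0^\infty\tilde g(r) r^{-1}\,dr=\sum_j c_j\log(b_j/a_j)$. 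Stone--Weierstrass density of such products in $C_c(\mathbb G\setminus\{0\})$, combined with monotone convergence, extends the identity to all $f\in L^1(\mathbb G)$. Uniqueness is immediate by testing against $f$ with $\int_0^\infty g(r)\,r^{Q-1}\,dr\ne 0$.

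The decisive step is extracting the logarithmic factor $\log(b/a)$ in $I_{a,b}(h)$: inserting the weight $|x|^{-Q}$ is precisely what converts the $Q$-homogeneity of Haar measure into scale invariance on annuli, and this is what decouples the angular measure $\sigma$ on $\wp$ from the radial measure $r^{Q-1}\,dr$ on $(0,\infty)$. Once the dilation-scaling/Cauchy argument is in place, the remainder is routine density and measure-theoretic bookkeeping.
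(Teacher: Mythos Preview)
Your argument is correct and is essentially the classical Folland--Stein construction: note, however, that the paper does not supply its own proof of this proposition but simply quotes it from \cite{FS-Hardy} (see also \cite[Section~3.1.7]{FR}), so there is no in-paper proof to compare against. Your Riesz-representation route via the dilation-invariant functional $I_{a,b}(h)$ and the Cauchy functional equation in $\log(b/a)$ is precisely the standard proof found in those references.
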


We now fix a basis $\{X_{1},...,X_{n}\}$ of $\mathfrak{g}$
such that
$$AX_{k}=\nu_{k}X_{k}$$
for each $k$, so that $A$ is given by \eqref{EQ:mA}. 
Denote by 
$e(x)=(e_{1}(x),\ldots,e_{n}(x))$ the vector that is determined by
$$
{\exp}_{\mathbb{G}}^{-1}(x)=e(x)\cdot \nabla\equiv\sum_{j=1}^{n}e_{j}(x)X_{j}.
$$
Then one has
$$x={\exp}_{\mathbb{G}}\left(e_{1}(x)X_{1}+\ldots+e_{n}(x)X_{n}\right),$$
$$rx={\exp}_{\mathbb{G}}\left(r^{\nu_{1}}e_{1}(x)X_{1}+\ldots
+r^{\nu_{n}}e_{n}(x)X_{n}\right),$$
$$e(rx)=(r^{\nu_{1}}e_{1}(x),\ldots,r^{\nu_{n}}e_{n}(x)).$$
Thus, since $r>0$ is arbitrary, without loss of generality taking $|x|=1$, we obtain
\begin{equation}
\frac{d}{d|rx|}f(rx) =  \frac{d}{dr}f({\exp}_{\mathbb{G}}
\left(r^{\nu_{1}}e_{1}(x)X_{1}+\ldots
+r^{\nu_{n}}e_{n}(x)X_{n}\right)).
\end{equation}
Denoting by
\begin{equation}\label{EQ:Euler}
\mathcal{R} :=\frac{d}{dr},
\end{equation}
for all $x\in \mathbb G$ this gives the equality
\begin{equation}\label{dfdr}
\frac{d}{d|x|}f(x)=\mathcal{R}f(x),
\end{equation}
for each homogeneous quasi-norm $|x|$ on a homogeneous group $\mathbb G.$
The formula \eqref{dfdr} plays a role of radial derivative on $\mathbb G$ and will be useful for our calculations.
We note that the operator
\begin{equation}\label{EQ:Euler}
{\tt Euler}:=|x|\mathcal{R}
\end{equation}
is homogeneous of order zero and can be thought of as the Euler operator on the homogeneous group $\mathbb G$ characterising the homogeneity of functions: a continuously differentiable function $f$ is positively homogeneous of order $\mu$, i.e. $f(rx)=r^{\mu}f(x)$ for all $r>0$ and $x\not=0$, if and only if ${\tt Euler} (f)=\mu f$.

\section{Critical Hardy inequalities and uncertainty type principle}
\label{Sec3}

We now present a range of critical Hardy inequalities on the homogeneous group $\mathbb{G}$. In the isotropic (standard) Euclidean case of $\mathbb R^{n}$ with the quasi-norm being the Euclidean norm, the following result was obtained in \cite{IIO} for $p=Q$ and in \cite{MOW:Hardy-Hayashi} for $p=2$. Thus, already in such setting the following inequalities for the whole range of $1<p<\infty$ are new.

\begin{thm}\label{CHardy}
Let $\mathbb{G}$ be a homogeneous group of homogeneous dimension $Q\geq 2$ and a homogeneous quasi-norm denoted by $|\cdot|$.
Let $f\in C_{0}^{\infty}(\mathbb{G}\backslash\{0\})$ and $f_{R}=f(R\frac{x}{|x|})$ for $x\in\mathbb{G}$ and $R>0$. Then the following generalised critical Hardy
inequality is valid:
\begin{equation}\label{LH2}
\qquad  \underset{R>0}{\sup}\left\|\frac{f-f_{R}}{|x|^{\frac{Q}{p}}{\log}\frac{R}{|x|}}
\right\|_{L^{p}(\mathbb{G})}\leq
\frac{p}{p-1}\left\| \frac{1}{|x|^{\frac{Q}{p}-1}} \mathcal{R} f\right\|_{L^{p}(\mathbb{G})},\quad 1<p<\infty,
\end{equation}
where
$\mathcal{R}$ is the radial operator on 
$\mathbb{G}$ with respect to the quasi-norm $|\cdot|$, and where the constant  $\frac{p}{p-1}$ is optimal.
\end{thm}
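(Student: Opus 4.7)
The plan is to reduce \eqref{LH2} to the classical one-dimensional Hardy inequality by combining the polar decomposition from Proposition \ref{polarinteg} with a logarithmic substitution in the radial coordinate.

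First, fix $R>0$ and apply Proposition \ref{polarinteg}, writing $x=ry$ with $r=|x|$ and $y\in\wp$, and set $g_{y}(r):=f(ry)$. The factor $r^{Q-1}$ coming from \eqref{EQ:sphere} cancels the weights $r^{-Q}$ and $r^{-(Q-p)}$ on the two sides of \eqref{LH2}, while identity \eqref{dfdr} yields $\mathcal{R}f(ry)=g_{y}'(r)$. By Fubini, \eqref{LH2} therefore reduces, after raising to the $p$-th power, to the one-dimensional estimate
\begin{equation*}
\int_{0}^{\infty}\frac{|g(r)-g(R)|^{p}}{r\,|\log(R/r)|^{p}}\,dr \;\leq\; \Bigl(\frac{p}{p-1}\Bigr)^{p}\int_{0}^{\infty} r^{p-1}\,|g'(r)|^{p}\,dr,
\end{equation*}
for every $g\in C_{c}^{\infty}(0,\infty)$, with a constant independent of $R$.

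Next, I would substitute $t=\log(R/r)$, so that $r=Re^{-t}$ and $dr/r=-dt$, and set $h(t):=g(Re^{-t})$; then $|h'(t)|=r|g'(r)|$, and the estimate above transforms into
\begin{equation*}
\int_{-\infty}^{\infty}\frac{|h(t)-h(0)|^{p}}{|t|^{p}}\,dt \;\leq\; \Bigl(\frac{p}{p-1}\Bigr)^{p}\int_{-\infty}^{\infty} |h'(t)|^{p}\,dt.
\end{equation*}
Applied to $u(t):=h(t)-h(0)$, which vanishes at $t=0$ and has compact support, this is exactly the classical one-dimensional Hardy inequality used separately on $(0,\infty)$ and on $(-\infty,0)$ and then summed, whose sharp constant is $\frac{p}{p-1}$. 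Since the bound obtained is uniform in $R$, taking $\sup_{R>0}$ on the left is harmless, and \eqref{LH2} follows.

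For the sharpness of $\frac{p}{p-1}$ in \eqref{LH2}, I would push a near-extremal for the one-dimensional Hardy inequality back through the two substitutions. Concretely, starting from a smooth truncation of $t^{(p-1)/p+\varepsilon}$ on $(0,\infty)$ (and zero on $(-\infty,0)$), I would build a radial test function $f_{\varepsilon}(x)=\psi_{\varepsilon}(|x|)$ supported away from the origin and infinity, so that the spherical integral contributes the same factor $|\wp|$ on both sides and the one-dimensional ratio is inherited verbatim. I do not foresee a conceptual obstacle here: the forward inequality is a direct reduction, and the main point requiring care is the routine but attentive bookkeeping needed to verify that the truncation and approximation steps in the sharpness argument do not incur any loss as $\varepsilon\to 0$.
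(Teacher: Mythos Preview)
Your argument is correct and proceeds by a genuinely different route from the paper. The paper does not substitute $t=\log(R/r)$ at all; instead it integrates by parts directly in the radial variable on $B(0,R)$ and on $B^{c}(0,R)$, and then rewrites the resulting identity as an exact remainder formula
\[
\|u\|_{L^{p}}^{p}=\Bigl(\tfrac{p}{p-1}\Bigr)^{p}\|v\|_{L^{p}}^{p}-p\int_{\mathbb{G}} I\bigl(u,-\tfrac{p}{p-1}v\bigr)\,\bigl|\tfrac{p}{p-1}v+u\bigr|^{2}\,dx,
\]
with $u=\dfrac{f-f_{R}}{|x|^{Q/p}\log(R/|x|)}$ and $v=|x|^{1-Q/p}\mathcal{R}f$, from which both the inequality and the sharpness of $\tfrac{p}{p-1}$ are read off simultaneously. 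Your approach is cleaner and more conceptual---it reduces everything to the textbook one-dimensional Hardy inequality via a single change of variables---but it forfeits this explicit remainder identity, which the paper highlights as part of the content of the theorem.

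One small slip: you write that $u(t)=h(t)-h(0)$ ``has compact support''. It need not: $h$ has compact support, but if $R$ lies in the support of $g$ then $h(0)\neq 0$ and $u$ equals the nonzero constant $-h(0)$ outside a compact set. This does not damage the argument---the one-dimensional Hardy inequality on $(0,\infty)$ requires only $u(0)=0$, $u$ bounded, and $u'\in L^{p}$, all of which hold here (the boundary term $|u(t)|^{p}t^{1-p}$ still vanishes at infinity since $p>1$)---but you should state the hypothesis you are actually using. Your sharpness sketch is fine; just note that the smoothing at $t=0$ (to glue the near-extremal on $(0,\infty)$ to zero on $(-\infty,0)$) must be done so that the $L^{p}$ norm of the derivative near $t=0$ is negligible, which is routine since $(p-1)/p+\varepsilon>0$.
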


It follows from the remainder formula obtained in the proof (i.e. the exact expression \eqref{EQ:rem} for the difference between two sides of \eqref{LH2}) that the constant $\frac{p}{p-1}$ in
\eqref{LH2} is the best constant.

We note that for $p=Q$ the estimate \eqref{LH2} becomes
\begin{equation}\label{LH2-Q}
\qquad  \underset{R>0}{\sup}\left\|\frac{f-f_{R}}{|x|{\log}\frac{R}{|x|}}
\right\|_{L^{Q}(\mathbb{G})}\leq
\frac{Q}{Q-1}\left\|\mathcal{R} f\right\|_{L^{Q}(\mathbb{G})},\quad Q\geq 2,
\end{equation}
which gives the critical estimate \eqref{iLH20}.
In the Euclidean isotropic case inequalities \eqref{LH2} with the Euclidean norm imply
\eqref{LH2-Rn}.

\begin{proof}[Proof of Theorem \ref{CHardy}]
Introducing polar coordinates $(r,y)=(|x|, \frac{x}{\mid x\mid})\in (0,\infty)\times\wp$ on $\mathbb{G}$, where $|x|$ is a homogeneous quasi-norm of $x\in\mathbb{G}$ (see Definition \ref{DEF:norm}) and using Proposition \ref{polarinteg} one calculates
$$
\int_{B(0,R)}
\frac{|f(x)-f_{R}(x)|^{p}}
{|x|^{Q}|{\rm log}\frac{R}{|x|}|^{p}}dx
$$

$$
=\int_{0}^{R}\int_{\wp}
\frac{|f(ry)-f(Ry)|^{p}}
{r^{Q}\left({\rm log}\frac{R}{r}\right)^{p}}r^{Q-1}d\sigma(y)dr
$$

$$
=\int_{0}^{R}\frac{d}{dr}\left( \frac{1}{p-1}\frac{1}
{\left({\rm log}\frac{R}{r}\right)^{p-1}}\int_{\wp}
|f(ry)-f(Ry)|^{p}d\sigma(y)\right) dr
$$

$$
-\frac{p}{p-1} {\rm Re}\int_{0}^{R}\left( \frac{1}
{\left({\rm log}\frac{R}{r}\right)^{p-1}}\int_{\wp}
|f(ry)-f(Ry)|^{p-2}(f(ry)-f(Ry))
\overline{\frac{df(ry)}{dr}}d\sigma(y)\right) dr
$$
$$
=-\frac{p}{p-1} {\rm Re}\int_{0}^{R}\left( \frac{1}
{\left({\rm log}\frac{R}{r}\right)^{p-1}}\int_{\wp}
|f(ry)-f(Ry)|^{p-2}(f(ry)-f(Ry))
\overline{\frac{df(ry)}{dr}}d\sigma(y)\right) dr,
$$
where $\sigma$ is the Borel measure on
$\wp$ and the contribution
on the boundary at $r=R$ vanishes due to the inequalities
$$|f(ry)-f(Ry)|\leq C(R-r),$$
$$\frac{R-r}{R}\leq {\rm log}\frac{R}{r}.$$
Using the formula \eqref{dfdr}
we arrive at
$$
\int_{B(0,R)}
\frac{|f(x)-f_{R}(x)|^{p}}
{|x|^{Q}|{\rm log}\frac{R}{|x|}|^{p}}dx
$$
$$
=-\frac{p}{p-1} {\rm Re}\int_{0}^{R}\frac{1}
{\left({\rm log}\frac{R}{r}\right)^{p-1}}\int_{\wp}
|f(ry)-f(Ry)|^{p-2}
$$$$(f(ry)-f(Ry))
\overline{\frac{d f(ry)}{dr}}d\sigma(y) dr
$$
$$
=-\frac{p}{p-1} {\rm Re}\int_{B(0,R)}
\left|\frac{f(x)-f_{R}(x)}
{|x|^{\frac{Q}{p}}{\rm log}\frac{R}{|x|}}\right|^{p-2}
\frac{(f(x)-f_{R}(x))}{|x|^{\frac{Q}{p}}{\rm log}\frac{R}{|x|}}
\frac{1}{|x|^{\frac{Q}{p}-1}}\overline{\frac{df(x)}{d|x|}}dx.
$$
Similarly, one obtains
$$
\int_{B^{c}(0,R)}
\frac{|f(x)-f_{R}(x)|^{p}}
{|x|^{Q}|{\rm log}\frac{R}{|x|}|^{p}}dx
$$

$$
=\int_{R}^{\infty}\int_{\wp}
\frac{|f(ry)-f(Ry)|^{p}}
{r^{Q}\left({\rm log}\frac{r}{R}
\right)^{p}}r^{Q-1}d\sigma(y)dr
$$

$$
=-\int_{R}^{\infty}\frac{d}{dr}\left(\frac{1}{p-1} \frac{1}
{\left({\rm log}\frac{r}{R}\right)^{p-1}}\int_{\wp}
|f(ry)-f(Ry)|^{p}d\sigma(y)\right) dr
$$

$$
+\frac{p}{p-1} {\rm Re}\int_{R}^{\infty}\left( \frac{1}
{\left({\rm log}\frac{r}{R}\right)^{p-1}}\int_{\wp}
|f(ry)-f(Ry)|^{p-2}(f(ry)-f(Ry))
\overline{\frac{df(ry)}{dr}}d\sigma(y)\right) dr
$$
$$
=-\frac{p}{p-1} {\rm Re}\int_{B^{c}(0,R)}
\left|\frac{f(x)-f_{R}(x)}
{|x|^{\frac{Q}{p}}{\rm log}\frac{R}{|x|}}\right|^{p-2}
\frac{(f(x)-f_{R}(x))}{|x|^{\frac{Q}{p}}{\rm log}\frac{R}{|x|}}
\frac{1}{|x|^{\frac{Q}{p}-1}}\overline{\frac{df(x)}{d|x|}}dx.
$$
It follows that
$$\int_{\mathbb{G}}
\frac{|f(x)-f_{R}(x)|^{p}}
{|x|^{Q}|{\rm log}\frac{R}{|x|}|^{p}}dx=
-\frac{p}{p-1} {\rm Re}\int_{\mathbb{G}}
\left|\frac{f(x)-f_{R}(x)}
{|x|^{\frac{Q}{p}}{\rm log}\frac{R}{|x|}}\right|^{p-2}
$$$$
\frac{(f(x)-f_{R}(x))}{|x|^{\frac{Q}{p}}{\rm log}\frac{R}{|x|}}
\frac{1}{|x|^{\frac{Q}{p}-1}}\overline{\frac{df(x)}{d|x|}}dx$$
$$=\left(\frac{p}{p-1}\right)^{p}
\|v\|^{p}_{L^{p}(\mathbb{G})}-p
\int_{\mathbb{G}}I(u,-\frac{p}{p-1}v)\left|\frac{p}{p-1}v+u\right|^{2}dx,$$
where
$$u=\frac{f(x)-f_{R}(x)}
{|x|^{\frac{Q}{p}}{\rm log}\frac{R}{|x|}},$$
$$v=\frac{1}{|x|^{\frac{Q}{p}-1}}\frac{df(x)}{d|x|},$$
and $I$ is defined by
$$I(f,g):=\left(\frac{1}{p}|g|^{p}+\frac{1}{p^{\prime}}|f|^{p}-|f|^{p-2}\,{\rm Re}(f\overline{g})\right)|f-g|^{-2}\geq 0,\quad f\neq g,\quad\frac{1}{p}+\frac{1}{p^{\prime}}=1,$$
$$I(g,g):=\frac{p-1}{2}|g|^{p-2}.$$
That is,
\begin{equation}\label{EQ:rem}
\|u\|^{p}_{L^{p}(\mathbb{G})}=
\left(\frac{p}{p-1}\right)^{p}\|v\|^{p}_{L^{p}(\mathbb{G})}-p\int_{\mathbb{G}}I(u,-\frac{p}{p-1}v)\left|\frac{p}{p-1}v+u\right|^{2}dx.
\end{equation}
This proves the inequality \eqref{LH2} since the last term is non-positive.
\end{proof}

Theorem \ref{CHardy} implies the following
uncertainly type principles:

\begin{cor}[Uncertainly type principle on $\mathbb{G}$]
\label{Luncertainty}
Let $1< p<\infty$ and $f\in C_{0}^{\infty}(\mathbb{G}\backslash \{0\})$.
Then for any $R>0$ and $\frac{1}{p}+\frac{1}{q}=
\frac{1}{2}$ with $q>1$, we have
\begin{equation}\label{UP1}
\left\|\frac{1}{|x|^{\frac{Q}{p}-1}}\mathcal{R} f\right
\|_{L^{p}(\mathbb{G})}
\left\| f\right\|_{L^{q}(\mathbb{G})}\geq \frac{p-1}{p}
\left\|\frac{f(f-f_{R})}{|x|^{\frac{Q}{p}}{\log}\frac{R}{|x|}}\right
\|_{L^{2}(\mathbb{G})}
\end{equation}
and also
\begin{equation}\label{UP2}
\left\|\frac{1}{|x|^{\frac{Q}{p}-1}}\mathcal{R} f\right
\|_{L^{p}(\mathbb{G})}
\left\|\frac{f-f_{R}}{|x|^{\frac{Q}{p^{\prime}}}{\log}\frac{R}{|x|}}
\right\|_{L^{p^{\prime}}(\mathbb{G})}
\geq
\frac{p-1}{p}\left\|\frac{f-f_{R}}{|x|^{\frac{Q}{2}}
{\log}\frac{R}{|x|}}\right\|^{2}_{L^{2}(\mathbb{G})}
\end{equation}
for $\frac{1}{p}+\frac{1}{p^{\prime}}=1$.
\end{cor}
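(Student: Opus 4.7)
The plan is to deduce both uncertainty estimates directly from the critical Hardy inequality \eqref{LH2} of Theorem \ref{CHardy} by applying H\"older's inequality to split the $L^2$-norm on the right-hand side into a product of two pieces, one of which is exactly the quantity controlled by \eqref{LH2}. No fresh analysis on $\mathbb G$ is required; the entire argument reduces to a careful choice of H\"older exponents so that the weights $|x|^{-Q/p}$, $|x|^{-Q/p'}$, $|x|^{-Q/2}$ balance correctly.

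For \eqref{UP1}, I would write
$$\left\|\frac{f(f-f_R)}{|x|^{Q/p}\log\frac{R}{|x|}}\right\|_{L^2(\mathbb G)}^{2}=\int_{\mathbb G}|f|^{2}\cdot\frac{|f-f_R|^{2}}{|x|^{2Q/p}\bigl(\log\frac{R}{|x|}\bigr)^{2}}\,dx,$$
and apply H\"older with conjugate exponents $q/2$ and $p/2$, which is permissible precisely because $\tfrac{1}{p}+\tfrac{1}{q}=\tfrac12$. The weight on the second factor becomes the $p$-th power $|x|^{-Q}(\log\frac{R}{|x|})^{-p}$, so after taking the square root one obtains
$$\left\|\frac{f(f-f_R)}{|x|^{Q/p}\log\frac{R}{|x|}}\right\|_{L^2(\mathbb G)}\leq \|f\|_{L^q(\mathbb G)}\cdot\left\|\frac{f-f_R}{|x|^{Q/p}\log\frac{R}{|x|}}\right\|_{L^p(\mathbb G)},$$
and then Theorem \ref{CHardy} bounds the last factor by $\tfrac{p}{p-1}\bigl\||x|^{1-Q/p}\mathcal R f\bigr\|_{L^p(\mathbb G)}$. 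Rearranging gives \eqref{UP1}.

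For \eqref{UP2}, the key observation is the identity $\tfrac{Q}{p}+\tfrac{Q}{p'}=Q$, which allows the symmetric splitting
$$\frac{|f-f_R|^{2}}{|x|^{Q}(\log\frac{R}{|x|})^{2}}=\frac{|f-f_R|}{|x|^{Q/p}|\log\frac{R}{|x|}|}\cdot\frac{|f-f_R|}{|x|^{Q/p'}|\log\frac{R}{|x|}|}.$$
H\"older's inequality with exponents $p$ and $p'$ then yields
$$\left\|\frac{f-f_R}{|x|^{Q/2}\log\frac{R}{|x|}}\right\|_{L^2(\mathbb G)}^{2}\leq \left\|\frac{f-f_R}{|x|^{Q/p}\log\frac{R}{|x|}}\right\|_{L^p(\mathbb G)}\left\|\frac{f-f_R}{|x|^{Q/p'}\log\frac{R}{|x|}}\right\|_{L^{p'}(\mathbb G)},$$
and applying Theorem \ref{CHardy} (in the form taking the supremum over $R>0$, or simply for this fixed $R$ since \eqref{LH2} holds for each $R$) to the first factor produces \eqref{UP2} after rearrangement.

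There is no substantial obstacle: once the correct H\"older pairing is identified, each step is a one-line calculation. The only item to verify is that the integrals under consideration are finite, which follows from $f\in C_0^{\infty}(\mathbb G\setminus\{0\})$ together with the polar decomposition of Proposition \ref{polarinteg}, exactly as in the proof of Theorem \ref{CHardy}.
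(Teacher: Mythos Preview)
Your proposal is correct and follows essentially the same approach as the paper: both arguments combine Theorem \ref{CHardy} with H\"older's inequality using the exponents $p/2,\,q/2$ for \eqref{UP1} and $p,\,p'$ for \eqref{UP2}, the only difference being that the paper starts from the left-hand side and works downward while you start from the $L^2$-norm and work upward. The paper in fact only writes out \eqref{UP1} and then says ``The proof of \eqref{UP2} is similar,'' so your explicit treatment of \eqref{UP2} via the splitting $Q/p+Q/p'=Q$ is exactly what is intended.
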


\begin{proof}[Proof of Corollary \ref{Luncertainty}]

By using the critical Hardy inequality in Theorem \ref{CHardy} we have
$$\left\|\frac{1}{|x|^{\frac{Q}{p}-1}}\mathcal{R} f\right\|_{L^{p}(\mathbb{G})}
\left\| f\right\|_{L^{q}(\mathbb{G})}\geq \frac{p-1}{p} \left\|\frac{f-f_{R}}{|x|^{\frac{Q}{p}}{\log}\frac{R}{|x|}}\right\|_{L^{p}(\mathbb{G})}
\left\| f\right\|_{L^{q}(\mathbb{G})}$$

$$=\frac{p-1}{p}\left(\int_{\mathbb{G}}\left|\frac{f-f_{R}}{|x|^{\frac{Q}{p}}{\log}\frac{R}{|x|}}
\right|^{2\frac{p}{2}}dx\right)^{\frac{1}{2}\frac{2}{p}}\left(\int_{\mathbb{G}} |f|^{2\frac{q}{2}}dx\right)^{\frac{1}{2}\frac{2}{q}}
$$

$$\geq\frac{p-1}{p}\left(\int_{\mathbb{G}}\left|\frac{f(f-f_{R})}{|x|^{\frac{Q}{p}}{\log}\frac{R}{|x|}}
\right|^{2}dx\right)^{\frac{1}{2}}=\frac{p-1}{p}\left\|\frac{f(f-f_{R})}{|x|^{\frac{Q}{p}}{\log}\frac{R}{|x|}}\right\|_{L^{2}(\mathbb{G})},$$
where we have used the H\"older inequality in the last line.
This shows \eqref{UP1}. The proof of \eqref{UP2} is similar.
\end{proof}

\section{Another type of critical Hardy inequality}
\label{Sec2}

In this section we prove another type of a critical Hardy inequality. This estimate, or rather its corollary \eqref{CKN}, is analogous to the critical Hardy inequality \eqref{HRn-cr} of Edmunds and Triebel, however, a new feature here is that the logarithmic term enters the right hand side. As such, this result is new also on $\mathbb R^{n}$, however, our techniques allow us to establish it also in the setting of homogeneous groups with arbitrary quasi-norms.

\begin{thm}\label{CritHardy}
	Let $\mathbb{G}$ be a homogeneous group
	of homogeneous dimension $Q$.
	Then for all complex-valued functions $f\in C^{\infty}_{0}(B(0,R)\backslash\{0\}),$
	and any homogeneous quasi-norm $|\cdot|$ on $\mathbb{G}$ we have
	\begin{equation}\label{CritLQHardy}
	\int_{B(0,R)}\frac{|f(x)|^{Q}}{|x|^{Q}}dx\leq Q^{Q}\int_{B(0,R)}|(\log |x|)\mathcal{R}f(x)|^{Q}dx,
	\end{equation}
	where the constant $Q^{Q}$ is optimal. Here $B(0,R)$ is the quasi-ball with respect to the quasi-norm $
	|\cdot|$.
\end{thm}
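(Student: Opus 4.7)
The plan is to reduce \eqref{CritLQHardy} to a one-dimensional computation via the polar decomposition of Proposition \ref{polarinteg}, exploit the identity $r^{-1}=\frac{d}{dr}\log r$ through integration by parts, and close the estimate with a single application of Hölder's inequality.

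Writing the left-hand side of \eqref{CritLQHardy} in polar coordinates $(r,y)\in(0,R)\times\wp$ yields $\int_0^R\int_\wp|f(ry)|^Q r^{-1}\,d\sigma(y)\,dr$, after cancelling $r^Q$ from $|x|^Q$ against $r^{Q-1}$ from the polar Haar decomposition. I would integrate by parts in $r$ with $u'=1/r$, $u=\log r$; the boundary contributions at $r=0$ and $r=R$ vanish since $f\in C_0^\infty(B(0,R)\setminus\{0\})$ has support separated from both endpoints. Using \eqref{dfdr} to identify $\partial_r f(ry)$ with $(\mathcal R f)(ry)$, the identity becomes
\begin{equation*}
\int_0^R\int_\wp\frac{|f(ry)|^Q}{r}\,d\sigma(y)\,dr=-Q\,\mathrm{Re}\int_0^R\int_\wp(\log r)\,|f(ry)|^{Q-2}f(ry)\,\overline{(\mathcal R f)(ry)}\,d\sigma(y)\,dr.
\end{equation*}
Taking absolute values and writing the integrand as $\bigl(|\log r|\,|(\mathcal R f)(ry)|\,r^{(Q-1)/Q}\bigr)\cdot\bigl(|f(ry)|^{Q-1}r^{-(Q-1)/Q}\bigr)$, I would apply Hölder's inequality with exponents $Q$ and $Q/(Q-1)$. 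The first factor yields $I^{1/Q}$, where $I:=\int_0^R\int_\wp|\log r|^Q|\mathcal R f|^Q r^{Q-1}\,d\sigma\,dr$ is precisely the right-hand side of \eqref{CritLQHardy} in polar form, while the second yields $L^{(Q-1)/Q}$, where $L$ is the target left-hand side. The resulting bound $L\le Q\,I^{1/Q}L^{(Q-1)/Q}$ (trivial when $L=0$) rearranges to $L\le Q^Q I$, establishing \eqref{CritLQHardy}.

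For optimality of the constant $Q^Q$, the Hölder equality condition $|\log r|\,r\,|\mathcal R f|=Q^{-1}|f|$ singles out $f(r)\propto(-\log r)^{\pm 1/Q}$ as the formal extremal; both profiles make $L$ and $I$ infinite, reflecting the critical character of the inequality. I would establish sharpness via an approximating family $f_\epsilon(x)=\phi(|x|)(-\log|x|)^{-1/Q-\epsilon}$, supported in a small quasi-ball $B(0,r_0)\subset B(0,R)$ with $r_0<1$ and $\phi$ a smooth cut-off that vanishes near the origin on scale $\delta=\delta(\epsilon)$. A direct computation on the region where $\phi\equiv 1$ gives $I_\epsilon^{\mathrm{bulk}}=(1/Q+\epsilon)^Q L_\epsilon^{\mathrm{bulk}}$, so $L_\epsilon^{\mathrm{bulk}}/I_\epsilon^{\mathrm{bulk}}\to Q^Q$ as $\epsilon\to 0^+$, which rules out any smaller constant. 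The main technical obstacle is controlling the $\phi'$-error terms from the cut-off near the origin: the bulk of $L_\epsilon$ diverges like $1/(Q\epsilon)$ whereas the cut-off contribution to $I_\epsilon$ grows only like $(\log(1/\delta))^{Q-1}$, so choosing $\delta$ a sufficiently fast-decaying function of $\epsilon$ (for instance $\delta=\exp(-\epsilon^{-1/Q})$) keeps the errors subleading and completes the sharpness argument.
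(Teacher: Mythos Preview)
Your proof of the inequality \eqref{CritLQHardy} is correct and coincides with the paper's argument step for step: polar decomposition, integration by parts against $(\log r)'=r^{-1}$, and H\"older with exponents $Q$ and $Q/(Q-1)$.

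For optimality the paper takes a much shorter route than you: it simply exhibits $h(x)=\log|x|$ as realising the equality condition in the H\"older step and stops there. Your approach via an explicit approximating family is more careful in spirit---indeed $\log|x|\notin C_0^\infty(B(0,R)\setminus\{0\})$, and it actually fails the sign condition in the preceding $\mathrm{Re}\le|\cdot|$ step---and you correctly identify $(-\log|x|)^{-1/Q}$ as the genuine formal extremal. However, your cut-off estimate does not close. With $\phi$ transitioning on $[\delta/2,\delta]$ one has $|\phi'|\sim\delta^{-1}$, and the $\phi'g_\epsilon$ contribution to $I_\epsilon$ is indeed of order $M^{Q-1}$ with $M:=\log(1/\delta)$. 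But the bulk $L_\epsilon^{\mathrm{bulk}}=|\wp|\,(Q\epsilon)^{-1}\bigl(a^{-Q\epsilon}-M^{-Q\epsilon}\bigr)$ is of order $1/(Q\epsilon)$ only when $Q\epsilon\log M$ is bounded away from zero, which forces $M\gtrsim e^{c/\epsilon}$ and hence $M^{Q-1}$ exponentially large; for your choice $\delta=\exp(-\epsilon^{-1/Q})$ one has instead $Q\epsilon\log M=\epsilon|\log\epsilon|\to 0$, whence $L_\epsilon^{\mathrm{bulk}}\sim|\wp|\log M\sim Q^{-1}|\wp|\,|\log\epsilon|$, which is dominated by the error $M^{Q-1}=\epsilon^{-(Q-1)/Q}$. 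A short case analysis shows that no coupling of $\delta$ and $\epsilon$ repairs this with a sharp cut-off.

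The remedy is a logarithmic cut-off: set $\phi_k(r)=\psi\bigl(k^{-1}\log(1/r)\bigr)$ with $\psi$ smooth, $\psi\equiv 1$ on $[0,1]$, $\psi\equiv 0$ on $[2,\infty)$, so that $|\phi_k'(r)|\lesssim (kr)^{-1}$ on the transition region $[e^{-2k},e^{-k}]$. Then, already with $\epsilon=0$ and $g=(-\log r)^{-1/Q}$, the cut-off contribution to $I_k$ is $\lesssim k^{-Q}\int_k^{2k}u^{Q-1}\,du=O(1)$, while $L_k^{\mathrm{bulk}}=Q^{Q}I_k^{\mathrm{bulk}}=|\wp|\log(k/a)\to\infty$, giving $L_k/I_k\to Q^Q$.
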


\begin{rem}
	In the abelian case ${\mathbb G}=(\mathbb R^{n},+)$ we have
	$Q=n$,  so for any quasi-norm $|\cdot|$ on $\mathbb R^{n}$ \eqref{CritLQHardy} implies
	a new inequality with the optimal constant: For each
	$f\in C_{0}^{\infty}(B(0,R)\backslash\{0\}),$
	we have
	\begin{equation}\label{Hardy-r}
	\left\|\frac{f}{|x|}
	\right\|_{L^{n}(B(0,R))}\leq n
	\left\|\left(\log |x|\right)\frac{x}{|x|}\cdot\nabla f\right\|_{L^{n}
		(B(0,R))}.
	\end{equation}
	
	In turn, by using Schwarz's inequality with the standard Euclidean distance $|x|=\sqrt{x^{2}_{1}+\ldots+x^{2}_{n}}$, since $R$ is arbitrary this implies the Euclidean version of the critical Hardy inequality for $\mathbb{G}\equiv\mathbb{R}^{n}$ with the optimal constant:
	\begin{equation}\label{CKN}
	\left\|\frac{f}{|x|}\right\|_{L^{n}(\mathbb{R}^{n})}
	\leq n
	\left\|\log|x|\nabla f\right\|_{L^{n}(\mathbb{R}^{n})},
	\end{equation}
	for all $f\in C_{0}^{\infty}(\mathbb{R}^{n}\backslash\{0\}).$ Here $\nabla$ is the standard
	gradient in $\mathbb{R}^{n}.$
\end{rem}

Thus, even in the abelian case of
$\mathbb{R}^{n}$, for example, the inequality \eqref{Hardy-r} also provides new insights in view
of the arbitrariness of the choice of the not necessarily Euclidean norm.

\begin{proof}[Proof of Theorem \ref{CritHardy}]
	A direct calculation with integrating by parts gives
	\begin{multline*}
	\int_{B(0,R)}\frac{|f(x)|^{p}}{|x|^{Q}}dx
	=\int_{0}^{R}\int_{\wp}|f(\delta_{r}(y))|^{p}r^{Q-1-Q}d\sigma(y)dr\\
	=-p\int_{0}^{R} \log r {\rm Re} \int_{\wp}|f(\delta_{r}(y))|^{p-2} f(\delta_{r}(y)) \overline{\frac{df(\delta_{r}(y))}{dr}}d\sigma(y)dr\\
	\leq p \int_{B(0,R)}\frac{|\mathcal{R}f(x)||f(x)|^{p-1}}
	{|x|^{Q-1}}|\log\,|x||dx=
	p\int_{B(0,R)}
	\frac{|\mathcal{R}f(x)|\log |x|||}{|x|^{\frac{Q}{p}-1}}
	\frac{|f(x)|^{p-1}}{|x|^{\frac{Q(p-1)}{p}}}dx.
	\end{multline*}
	By using H\"{o}lder's inequality, it follows that
	$$
	\int_{B(0,R)}\frac{|f(x)|^{p}}{|x|^{Q}}dx\leq p\left(\int_{B(0,R)}\frac{|\mathcal{R}f(x)|^{p}|\log |x||^{p}}{|x|^{Q-p}}dx\right)
	^{\frac{1}{p}}\left(\int_{B(0,R)}\frac{|f(x)|^{p}}{|x|^{Q}}dx\right)^{\frac{p-1}{p}},
	$$
	which gives \eqref{CritLQHardy}.
	
	Now we show the optimality of the constant, so we need to check the equality
	condition in above H\"older's inequality.
	Let us consider the function
	$$h(x)=\log |x|.$$
	Then a straightforward calculation shows that
	\begin{equation}\label{Holder_eq}
	\left(\frac{|\mathcal{R}h(x)||\log |x||}{|x|^{\frac{Q}{p}-1}}\right)^{p}=
	\left(\frac{|h(x)|^{p-1}}
	{|x|^{\frac{Q (p-1)}{p}}}\right)^{\frac{p}{p-1}},
	\end{equation}
	which satisfies the equality condition in H\"older's inequality.
	This gives the optimality of the constant $Q^{Q}$ in \eqref{CritLQHardy}.
\end{proof}

\begin{cor}[Critical uncertainty principle on a quasi-ball $B(0,R)\subset\mathbb{G}$]\label{Luncertainty}
	Let $\mathbb{G}$ be a homogeneous group of homogeneous dimension
	$Q\geq 2$. Then for each $f\in C^{\infty}_{0}(B(0,R)\backslash\{0\})$ and an arbitrary homogeneous quasi-norm $|\cdot|$ on $\mathbb{G}$ we have
	\begin{equation}\label{UP1}
	\left(\int_{B(0,R)}\left|(\log |x|)\mathcal{R}f\right|^{Q}dx\right)^{\frac{1}{Q}}
	\left(\int_{B(0,R)}|x|^{\frac{Q}{Q-1}}
	|f|^{\frac{Q}{Q-1}}dx\right)^{\frac{Q-1}{Q}}
	\geq\frac{1}{Q}\int_{B(0,R)}
	|f|^{2}dx.
	\end{equation}
\end{cor}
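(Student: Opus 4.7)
The plan is to combine Theorem \ref{CritHardy} with one application of H\"older's inequality. The key observation is that the quantity $|f|^{2}$ on the right-hand side of the claim can be factored as the product of $|f|/|x|$ (which the critical Hardy inequality controls in $L^{Q}$) and $|x||f|$ (which will contribute the second factor on the left). Thus I would first split
\begin{equation*}
\int_{B(0,R)}|f|^{2}\,dx=\int_{B(0,R)}\frac{|f|}{|x|}\cdot|x||f|\,dx,
\end{equation*}
and apply H\"older's inequality with the conjugate exponents $Q$ and $Q/(Q-1)$ to obtain
\begin{equation*}
\int_{B(0,R)}|f|^{2}\,dx\leq\left(\int_{B(0,R)}\frac{|f|^{Q}}{|x|^{Q}}\,dx\right)^{\!1/Q}\left(\int_{B(0,R)}|x|^{Q/(Q-1)}|f|^{Q/(Q-1)}\,dx\right)^{\!(Q-1)/Q}.
\end{equation*}

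Next I would invoke Theorem \ref{CritHardy}, which gives
\begin{equation*}
\left(\int_{B(0,R)}\frac{|f|^{Q}}{|x|^{Q}}\,dx\right)^{\!1/Q}\leq Q\left(\int_{B(0,R)}\bigl|(\log|x|)\mathcal{R}f\bigr|^{Q}\,dx\right)^{\!1/Q},
\end{equation*}
and substituting this into the previous estimate yields the claim after dividing by $Q$.

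There is essentially no obstacle here, because the corollary is a direct consequence of Theorem \ref{CritHardy} coupled with a single H\"older split; the only thing to verify is that the exponents $Q/(Q-1)$ appearing in the second integral on the left match the H\"older conjugate of $Q$, which they do. Since the proof only uses the radial operator $\mathcal{R}$ via Theorem \ref{CritHardy} and does not require any further structure of the homogeneous quasi-norm $|\cdot|$, the argument goes through verbatim for any choice of quasi-norm on $\mathbb{G}$.
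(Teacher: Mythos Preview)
Your proof is correct and essentially identical to the paper's argument: both combine Theorem~\ref{CritHardy} with a single application of H\"older's inequality using the conjugate exponents $Q$ and $Q/(Q-1)$ to the factorisation $|f|^{2}=\dfrac{|f|}{|x|}\cdot|x||f|$. The only cosmetic difference is the order of the two steps (you apply H\"older first and then Theorem~\ref{CritHardy}, while the paper applies Theorem~\ref{CritHardy} first and then H\"older).
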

\begin{proof}
	From the inequality \eqref{CritLQHardy} we get
	\begin{multline*}
	\left(\int_{B(0,R)}\left|(\log |x|)\mathcal{R}f\right|^{Q}dx\right)^{\frac{1}{Q}}
	\left(\int_{B(0,R)}|x|^{\frac{Q}{Q-1}}
	|f|^{\frac{Q}{Q-1}}dx\right)^{\frac{Q-1}{Q}}\geq
	\\\frac{1}{Q}\left(\int_{B(0,R)}
	\frac{|f|^{Q}}{|x|^{Q}}
	\,dx\right)^{\frac{1}{Q}}
	\left(\int_{B(0,R)}|x|^{\frac{Q}{Q-1}}
	|f|^{\frac{Q}{Q-1}}dx\right)^{\frac{Q-1}{Q}}
	\\ \geq\frac{1}{Q}\int_{B(0,R)}
	|f|^{2}dx,
	\end{multline*}
	where we have used the H\"older inequality in the last line.
	This shows \eqref{UP1}.
\end{proof}

\section{A class of Hardy-Sobolev type inequalities on quasi-balls}
\label{Sec4}

This section follows the Euclidean ideas from \cite{MOW:Tohoku} which can be modified to suit our setting. In the following inequalities having arbitrary quasi-norms seems interesting.

\begin{thm}\label{oHS}
	Let $\mathbb{G}$ be a homogeneous group
	of homogeneous dimension $Q\geq3$.
	Then for each $f\in C^{\infty}_{0}(B(0,R)\backslash\{0\})$
	and any homogeneous quasi-norm $|\cdot|$ on $\mathbb{G}$ we have
	\begin{equation}\label{thm1-1}
	\left(\int_{B(0,R)}\frac{1}{|x|^{2}}
	\left|f(x)-f\left(\frac{\delta_{R}(x)}{|x|}\right)\right|^{2}dx\right)^{\frac{1}{2}}\leq
	\frac{2}{Q-2}
	\left(\int_{B(0,R)}|\mathcal{R}f|^{2}dx\right)^{\frac{1}{2}},
	\end{equation}
	and
	\begin{multline}\label{thm1-2}
	\left(\int_{B(0,R)}\frac{1}{|x|^{2}}
	\left|f(x)\right|^{2}dx\right)^{\frac{1}{2}}\leq \left(\frac{Q}{Q-2}\right)^{\frac{1}{2}}\frac{1}{R}\left(\int_{B(0,R)}
	\left|f(x)\right|^{2}dx\right)^{\frac{1}{2}}
	\\+
	\frac{2}{Q-2}\left(1+\left(\frac{Q}{Q-2}\right)^{\frac{1}{2}}\right)
	\left(\int_{B(0,R)}|\mathcal{R}f|^{2}dx\right)^{\frac{1}{2}},
	\end{multline}
	where $B(0,R)$ is a quasi-ball with respect to the quasi-norm $|\cdot|$.
\end{thm}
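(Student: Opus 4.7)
The plan is to prove both inequalities via the polar decomposition of Proposition \ref{polarinteg} and radial integration by parts, in the spirit of the proof of Theorem \ref{CHardy}.

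For \eqref{thm1-1}, I set $g(x) := f(x) - f_{R}(x)$. Since $f_R$ depends only on $x/|x|$, we have $\mathcal{R}f_{R}\equiv 0$, hence $\mathcal{R}g = \mathcal{R}f$; moreover $g(Ry)=0$ for every $y\in\wp$. Passing to polar coordinates gives
$$
\int_{B(0,R)}\frac{|g(x)|^{2}}{|x|^{2}}dx = \int_{\wp}\!\!\int_{0}^{R}|g(ry)|^{2}r^{Q-3}dr\, d\sigma(y),
$$
and I would integrate by parts in $r$ using the identity $r^{Q-3}=\tfrac{1}{Q-2}\tfrac{d}{dr}(r^{Q-2})$. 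The boundary term at $r=R$ vanishes because $g(Ry)=0$, and the one at $r=0$ because $f$ is supported away from the origin. Applying \eqref{dfdr} and the Cauchy--Schwarz inequality to the remaining term, namely $-\tfrac{2}{Q-2}{\rm Re}\int_{\wp}\!\int_{0}^{R}r^{Q-2}g(ry)\,\overline{\mathcal{R}f(ry)}\,dr\,d\sigma$, and absorbing one factor of $\|g/|x|\|_{L^{2}}$ into the left-hand side then yields \eqref{thm1-1}.

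For \eqref{thm1-2}, I would use the splitting $f = (f-f_{R}) + f_{R}$ and the triangle inequality, so that the first summand is controlled by \eqref{thm1-1}, and it remains to estimate $\|f_{R}/|x|\|_{L^{2}(B(0,R))}$. Since $f_{R}$ is radially constant,
$$
\left\|\frac{f_{R}}{|x|}\right\|_{L^{2}(B(0,R))}^{2}=\frac{R^{Q-2}}{Q-2}\int_{\wp}|f(Ry)|^{2}d\sigma(y),
$$
so it suffices to control the boundary integral $\int_{\wp}|f(Ry)|^{2}d\sigma$. For this I would integrate the identity $\tfrac{d}{dr}\bigl[r^{Q}|f(ry)|^{2}\bigr]=Qr^{Q-1}|f(ry)|^{2}+2r^{Q}{\rm Re}\bigl(f(ry)\overline{\mathcal{R}f(ry)}\bigr)$ from $0$ to $R$, integrate over $\wp$, and return to Euclidean coordinates to obtain
$$
R^{Q}\int_{\wp}|f(Ry)|^{2}d\sigma = Q\,\|f\|_{L^{2}(B(0,R))}^{2}+2\,{\rm Re}\!\int_{B(0,R)}|x|f(x)\overline{\mathcal{R}f(x)}dx.
$$
Using the bound $|x|\le R$ together with Cauchy--Schwarz gives $R^{Q}\int_{\wp}|f(Ry)|^{2}d\sigma \leq Q\|f\|_{L^{2}}^{2}+2R\|f\|_{L^{2}}\|\mathcal{R}f\|_{L^{2}}$. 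Completing the square on the right and taking a square root converts this into an estimate of the form $\|f_{R}/|x|\|_{L^{2}}\leq \sqrt{Q/(Q-2)}\|f\|_{L^{2}}/R + C_{Q}\|\mathcal{R}f\|_{L^{2}}$, which combined with \eqref{thm1-1} applied to $f-f_R$ produces \eqref{thm1-2}.

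The main technical point is the second step: the identity for the trace on $\{|x|=R\}$ produces a cross term ${\rm Re}\int|x|f\overline{\mathcal{R}f}dx$ that couples $f$ and $\mathcal{R}f$, and one has to choose the right form of Cauchy--Schwarz (or a completion of the square) to separate them while producing exactly the coefficients $\sqrt{Q/(Q-2)}$ and $\tfrac{2}{Q-2}\bigl(1+\sqrt{Q/(Q-2)}\bigr)$ claimed in \eqref{thm1-2}. The rest of the argument is a direct transcription of the Euclidean strategy from \cite{MOW:Tohoku} to the homogeneous-group setting, made possible by the polar decomposition of Proposition \ref{polarinteg} and the radial derivative \eqref{dfdr}, neither of which requires any further structure than a choice of homogeneous quasi-norm.
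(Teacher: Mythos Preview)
Your argument for \eqref{thm1-1} is exactly the paper's: polar coordinates, integrate by parts against $\frac{1}{Q-2}\frac{d}{dr}r^{Q-2}$, note that the boundary term at $r=R$ vanishes because $g(Ry)=0$, and apply Cauchy--Schwarz.

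For \eqref{thm1-2} your route diverges from the paper's after the common starting point $\|f_R/|x|\|_{L^2}^{2}=\frac{R^{Q-2}}{Q-2}\int_{\wp}|f(Ry)|^{2}d\sigma$. The paper does \emph{not} use a trace identity: instead it observes that, since $f_R$ is radially constant, $\int_{\wp}|f(Ry)|^{2}d\sigma=\frac{Q}{R^{Q}}\|f_R\|_{L^{2}(B(0,R))}^{2}$, so that $\|f_R/|x|\|_{L^{2}}=\sqrt{Q/(Q-2)}\,R^{-1}\|f_R\|_{L^{2}}$. Then it bounds $\|f_R\|_{L^{2}}\le\|f_R-f\|_{L^{2}}+\|f\|_{L^{2}}$, uses the trivial pointwise inequality $1/R\le 1/|x|$ on $B(0,R)$ to send the first term back to $\|(f-f_R)/|x|\|_{L^{2}}$, and invokes \eqref{thm1-1} once more. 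This is what produces the coefficient $\frac{2}{Q-2}\bigl(1+\sqrt{Q/(Q-2)}\bigr)$ exactly: one copy of $\frac{2}{Q-2}$ comes from the direct use of \eqref{thm1-1} on $f-f_R$, and a second copy, multiplied by $\sqrt{Q/(Q-2)}$, comes from re-applying \eqref{thm1-1} after the triangle inequality on $\|f_R\|_{L^{2}}$.

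Your trace-identity approach is correct and in fact sharper. Carrying your computation through, completing the square gives
\[
\|f_R/|x|\|_{L^{2}}\le\sqrt{\tfrac{Q}{Q-2}}\,\tfrac{1}{R}\|f\|_{L^{2}}+\tfrac{1}{\sqrt{Q(Q-2)}}\|\mathcal{R}f\|_{L^{2}},
\]
hence a final constant $\frac{2}{Q-2}+\frac{1}{\sqrt{Q(Q-2)}}$ in front of $\|\mathcal{R}f\|_{L^{2}}$. Since $\frac{1}{\sqrt{Q(Q-2)}}<\frac{2\sqrt{Q}}{(Q-2)^{3/2}}=\frac{2}{Q-2}\sqrt{\frac{Q}{Q-2}}$ for every $Q\ge 3$, your constant is strictly smaller than the paper's, so \eqref{thm1-2} follows a fortiori and your worry about ``producing exactly the coefficients'' is unnecessary. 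What the paper's route buys is that the stated constants drop out with no algebra beyond two uses of \eqref{thm1-1}; what your route buys is a genuinely better inequality.
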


In the case $Q=2$ we have the following inequalities
\begin{thm}\label{Q=2}
	Let $\mathbb{G}$ be a homogeneous group
	of homogeneous dimension $Q=2$.
	Then for each $f\in C^{\infty}_{0}(B(0,R)\backslash\{0\})$
	and any homogeneous quasi-norm $|\cdot|$ on $\mathbb{G}$ we have
	\begin{equation}\label{thm2-1}
	\left(\int_{B(0,R)}\frac{1}{|x|^{2}\left|{\rm log} \frac{R}{|x|}\right|^{2}}
	\left|f(x)-f\left(\frac{\delta_{R}(x)}{|x|}\right)\right|^{2}dx\right)^{\frac{1}{2}}\leq
	2\left(\int_{B(0,R)}|\mathcal{R}f|^{2}dx\right)^{\frac{1}{2}},
	\end{equation}
	and
	\begin{multline}\label{thm2-2}
	\left(\int_{B(0,R)}\frac{\left|f(x)\right|^{2}}{|x|^{2} \left(1+\left|{\rm log} \frac{R}{|x|}\right|^{2}\right)^{2}}
	dx\right)^{\frac{1}{2}}\leq \frac{\sqrt{2}}{R}\left(\int_{B(0,R)}
	\left|f(x)\right|^{2}dx\right)^{\frac{1}{2}}
	\\+ 2\left(1+\sqrt{2}\right)
	\left(\int_{B(0,R)}|\mathcal{R}f|^{2}dx\right)^{\frac{1}{2}}.
	\end{multline}
\end{thm}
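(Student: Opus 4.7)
The plan is to derive both inequalities of Theorem \ref{Q=2} from Theorem \ref{CHardy}, suitably localised to the quasi-ball $B(0,R)$, together with an angular trace estimate for $f$ at the radius $|x|=R$. Inequality \eqref{thm2-1} follows immediately on specialising Theorem \ref{CHardy} to $p = Q = 2$: since $f \in C^\infty_0(B(0,R)\setminus\{0\})$ is supported in $B(0,R)$, one has $\|\mathcal{R}f\|_{L^2(\mathbb{G})} = \|\mathcal{R}f\|_{L^2(B(0,R))}$, and because the integrand on the left of \eqref{LH2} is non-negative, restricting its domain of integration from $\mathbb{G}$ to $B(0,R)$ only decreases the quantity. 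Since $\frac{p}{p-1}$ equals $2$ at $p=2$, this is precisely \eqref{thm2-1}.

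For \eqref{thm2-2}, I would split $f = (f - f_R) + f_R$ and apply the triangle inequality to the weighted $L^2(B(0,R))$-norm on the left. The summand involving $f - f_R$ is handled by the elementary bound $\frac{1}{1+t^2} \leq \frac{1}{t}$ valid for all $t > 0$ (equivalent to $t^2 - t + 1 \geq 0$, whose discriminant is negative), reducing it to \eqref{thm2-1} and yielding the contribution $2\|\mathcal{R}f\|_{L^2(B(0,R))}$.

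For the remaining angular summand, since $f_R(x) = f(Ry)$ with $y = x/|x|$ depends only on $y$, Proposition \ref{polarinteg} separates the squared norm as the product of an angular integral $\int_\wp |f(Ry)|^2\, d\sigma(y)$ and a purely radial integral $\int_0^R \frac{dr}{r(1+|\log(R/r)|^2)^2}$; the latter is finite (the substitution $s = \log(R/r)$ converts it to $\int_0^\infty \frac{ds}{(1+s^2)^2} = \pi/4$). To control the angular trace $\|f(R\,\cdot)\|_{L^2(\wp)}$, I would fix $y$ and set $F(r) := f(ry)$, multiply the identity $F(R) = F(r) + \int_r^R F'(s)\, ds$ by $r$, integrate $r$ over $(0,R)$, and apply Fubini to obtain
$$F(R) = \frac{2}{R^2}\int_0^R F(r)\, r\, dr + \frac{1}{R^2}\int_0^R F'(s)\, s^2\, ds.$$
Cauchy--Schwarz in $r$ against the measure $r\,dr$, followed by Minkowski in $y$, then gives a bound of the shape $\frac{\sqrt{2}}{R}\|f\|_{L^2(B(0,R))} + c\|\mathcal{R}f\|_{L^2(B(0,R))}$ for an explicit $c$. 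Combined with the first summand, this yields \eqref{thm2-2}. The main obstacle is tuning the constants so as to hit exactly $\sqrt{2}/R$ and $2(1+\sqrt{2})$ on the right-hand side, which requires a careful choice of the upper bound for the radial weight-integral and of the Cauchy--Schwarz split on the $F'$ piece; the overall architecture, however, directly mirrors the proof of Theorem \ref{oHS} in the $Q \geq 3$ case, with the logarithmic weight replacing the power weight appropriate for the critical exponent $Q = 2$.
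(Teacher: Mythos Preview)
Your derivation of \eqref{thm2-1} from Theorem \ref{CHardy} with $p=Q=2$ is correct and more economical than the paper's approach: the paper reproves this case from scratch by the same integration-by-parts plus Schwarz argument already carried out in the proof of Theorem \ref{CHardy}, so you are simply avoiding a repetition.

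For \eqref{thm2-2}, both you and the paper open with the same triangle-inequality split $f=(f-f_R)+f_R$ and handle the $(f-f_R)$ piece by comparing weights and invoking \eqref{thm2-1}. The treatments of the $f_R$ piece, however, differ. The paper computes the radial integral exactly (obtaining the value $1$; note that the paper's proof in fact uses the weight $(1+|\log(R/|x|)|)^{2}$ rather than the $(1+|\log(R/|x|)|^{2})^{2}$ printed in the statement), rewrites the result as $\frac{\sqrt{2}}{R}\|f_R\|_{L^2(B(0,R))}$, applies a \emph{second} triangle inequality $\|f_R\|\le\|f_R-f\|+\|f\|$, and then uses the pointwise bound $R^{-1}\le r^{-1}(1+\log(R/r))^{-1}$ to feed the $\|f_R-f\|$ term back into \eqref{thm2-1}. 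Your route is more direct: you bound the angular trace $\|f(R\cdot)\|_{L^2(\wp)}$ once and for all via the averaging identity for $F(R)$ and Cauchy--Schwarz. Carrying out your sketch yields $\|f(R\cdot)\|_{L^2(\wp)}\le\frac{\sqrt{2}}{R}\|f\|_{L^2(B(0,R))}+\frac{1}{2}\|\mathcal{R}f\|_{L^2(B(0,R))}$, so after multiplying by the square root of the radial integral and adding the $(f-f_R)$ contribution you obtain constants that are \emph{strictly smaller} than the stated $\sqrt{2}/R$ and $2(1+\sqrt{2})$. Your worry about ``tuning the constants'' is therefore unfounded: your method proves the stated inequality with room to spare, and the paper's larger constant on the $\|\mathcal{R}f\|$ term is simply an artefact of its looping back through \eqref{thm2-1} a second time.
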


\begin{proof}[Proof of Theorem \ref{oHS}]
	
	Introducing polar coordinates $(r,y)=(|x|, 
	\frac{x}{|x|})\in (0,\infty)\times\wp$ on $\mathbb{G}$, 
	where $\wp$ is the quasi-sphere in \eqref{EQ:sphere1}, and using the formula \eqref{EQ:sphere}
	one calculates
	$$
	\int_{B(0,R)}\frac{1}{|x|^{2}}\left|f(x)-f\left(\frac{\delta_{R}(x)}{|x|}\right)\right|^{2}dx$$
	$$=\int_{0}^{R}\int_{\wp}|f(\delta_{r}(y))-f(\delta_{R}(y))|^{2}r^{Q-3}d\sigma(y)dr$$
	$$
	=\frac{1}{Q-2}r^{Q-2}\int_{\wp}|f(\delta_{r}(y))-f(\delta_{R}(y))|^{2}d\sigma(y)\Bigg|_{r=0}^{r=R}
	$$
	$$-\frac{1}{Q-2}\int_{0}^{R}r^{Q-2}\left(\frac{d}{dr}\int_{\wp}|f(\delta_{r}(y))-f(\delta_{R}(y))|^{2}d\sigma(y)\right)dr$$
	$$=-\frac{2}{Q-2}\int_{0}^{R}r^{Q-2}{\rm Re}\int_{\wp}(f(\delta_{r}(y))-f(\delta_{R}(y)))\overline{\frac{df(\delta_{r}(y))}{dr} }d\sigma(y)dr.$$
	Now using Schwarz's inequality, we obtain
	$$
	\int_{B(0,R)}\frac{1}{|x|^{2}}\left|f(x)-f\left(\frac{\delta_{R}(x)}{|x|}\right)\right|^{2}dx$$
	$$\leq\frac{2}{Q-2}\left(\int_{0}^{R}\int_{\wp}|f(\delta_{r}(y))-f(\delta_{R}(y))|^{2}r^{Q-3}d\sigma(y)dr\right)^{\frac{1}{2}}$$
	$$\cdot\left(\int_{0}^{R}\int_{\wp}\left|\frac{df(\delta_{r}(y))}{dr} \right|^{2}r^{Q-1}d\sigma(y)dr\right)^{\frac{1}{2}}$$
	$$=\frac{2}{Q-2}\left(\int_{B(0,R)}\frac{1}{|x|^{2}}\left|f(x)-f\left(\frac{\delta_{R}(x)}{|x|}\right)\right|^{2}dx\right)^{\frac{1}{2}}
	\left(\int_{B(0,R)}|\mathcal{R}f|^{2}dx\right)^{\frac{1}{2}}.$$
	This implies that
	$$
	\left(\int_{B(0,R)}\frac{1}{|x|^{2}}
	\left|f(x)-f\left(\frac{\delta_{R}(x)}{|x|}\right)\right|^{2}dx\right)^{\frac{1}{2}}\leq
	\frac{2}{Q-2}
	\left(\int_{B(0,R)}|\mathcal{R}f|^{2}dx\right)^{\frac{1}{2}},
	$$
	that is, the inequality \eqref{thm1-1} is proved.
	
	To prove \eqref{thm1-2} let us recall the triangle inequality in the form 
	\begin{multline}\label{p01}
	\left(\int_{B(0,R)}\frac{1}{|x|^{2}}|f|^{2}dx\right)^{\frac{1}{2}}=
	\left(\int_{B(0,R)}\frac{1}{|x|^{2}}\left|f(x)-f\left(\frac{\delta_{R}(x)}{|x|}\right)
	+f\left(\frac{\delta_{R}(x)}{|x|}\right)\right|^{2}dx\right)^{\frac{1}{2}}\\
	\leq \left(\int_{B(0,R)}\frac{1}{|x|^{2}}\left|f(x)-f\left(\frac{\delta_{R}(x)}{|x|}\right)\right|^{2}dx\right)^{\frac{1}{2}}
	\\
	+\left(\int_{B(0,R)}\frac{1}{|x|^{2}}\left|f\left(\frac{\delta_{R}(x)}{|x|}\right)\right|^{2}dx\right)^{\frac{1}{2}}.
	\end{multline}
	On the other hand, we obtain
	$$\left(\int_{B(0,R)}\frac{1}{|x|^{2}}\left|f\left(\frac{\delta_{R}(x)}{|x|}\right)\right|^{2}dx\right)^{\frac{1}{2}}
	=\left(\int_{0}^{R}\int_{\wp}|f(\delta_{R}(y))|^{2}r^{Q-3}d\sigma(y)dr\right)^{\frac{1}{2}}$$
	$$=\left(\frac{R^{Q-2}}{Q-2}\int_{\wp}|f(\delta_{R}(y))|^{2}d\sigma(y)\right)^{\frac{1}{2}}$$
	$$=\left(\frac{R^{Q-2}}{Q-2}\frac{Q}{R^{Q}}\int_{0}^{R}\int_{\wp}|f(\delta_{R}(y))|^{2}r^{Q-1}d\sigma(y)dr\right)^{\frac{1}{2}}$$
	$$=\left(\frac{Q}{Q-2}\right)^{\frac{1}{2}}\frac{1}{R}\left(\int_{B(0,R)}\left|f\left(\frac{\delta_{R}(x)}{|x|}\right)\right|^{2}dx\right)^{\frac{1}{2}}$$
	$$\leq \left(\frac{Q}{Q-2}\right)^{\frac{1}{2}}\frac{1}{R}
	\left(\left(\int_{B(0,R)}\left|f\left(\frac{\delta_{R}(x)}{|x|}\right)-f(x)\right|^{2}dx\right)^{\frac{1}{2}}
	+\left(\int_{B(0,R)}|f|^{2}dx\right)^{\frac{1}{2}}\right)$$
	$$\leq \left(\frac{Q}{Q-2}\right)^{\frac{1}{2}}
	\left(\int_{B(0,R)}\frac{1}{|x|^{2}}\left|f\left(\frac{\delta_{R}(x)}{|x|}\right)-f(x)\right|^{2}dx\right)^{\frac{1}{2}}
	$$$$+\left(\frac{Q}{Q-2}\right)^{\frac{1}{2}}\frac{1}{R}\left(\int_{B(0,R)}|f|^{2}dx\right)^{\frac{1}{2}},$$
	that is,
	\begin{multline}\label{p02}
	\left(\int_{B(0,R)}\frac{1}{|x|^{2}}\left|f\left(\frac{\delta_{R}(x)}{|x|}\right)\right|^{2}dx\right)^{\frac{1}{2}}\\ \leq \left(\frac{Q}{Q-2}\right)^{\frac{1}{2}}
	\left(\int_{B(0,R)}\frac{1}{|x|^{2}}\left|f\left(\frac{\delta_{R}(x)}{|x|}\right)-f(x)\right|^{2}dx\right)^{\frac{1}{2}}
	\\+\left(\frac{Q}{Q-2}\right)^{\frac{1}{2}}\frac{1}{R}\left(\int_{B(0,R)}|f|^{2}dx\right)^{\frac{1}{2}}.
	\end{multline}
	Combining \eqref{p02} with  \eqref{p01} we arrive at
	\begin{multline}\label{p03}
	\left(\int_{B(0,R)}\frac{1}{|x|^{2}}|f|^{2}dx\right)^{\frac{1}{2}}
	\leq  \left(1+\left(\frac{Q}{Q-2}\right)^{\frac{1}{2}}\right)\left(\int_{B(0,R)}\frac{1}{|x|^{2}}\left|f(x)-f\left(\frac{\delta_{R}(x)}{|x|}\right)\right|^{2}dx\right)
	^{\frac{1}{2}}
	\\+\left(\frac{Q}{Q-2}\right)^{\frac{1}{2}}\frac{1}{R}\left(\int_{B(0,R)}|f|^{2}dx\right)^{\frac{1}{2}}.
	\end{multline}
	Now using \eqref{thm1-1} we obtain  \eqref{thm1-2}.
\end{proof}

\begin{proof}[Proof of Theorem \ref{Q=2}]
	Introducing polar coordinates $(r,y)=(|x|, \frac{x}{|x|})\in (0,\infty)\times\wp$ on $\mathbb{G}$, where $\wp$ is the quasi-sphere in 
	\eqref{EQ:sphere1}, and using the formula \eqref{EQ:sphere}
	one calculates
	$$
	\int_{B(0,R)}\frac{1}{|x|^{2}|{\rm log}(R/|x|)|^{2}}\left|f(x)-f\left(\frac{\delta_{R}(x)}{|x|}\right)\right|^{2}dx$$
	$$=\int_{0}^{R}\int_{\wp}|f(\delta_{r}(y))-f(\delta_{R}(y))|^{2}\frac{1}{r\left({\rm log}(R/r)\right)^{2}}d\sigma(y)dr$$
	$$
	=\frac{1}{{\rm log}(R/r)}\int_{\wp}|f(\delta_{r}(y))-f(\delta_{R}(y))|^{2}d\sigma(y)\Bigg|_{r=0}^{r=R}
	$$
	$$-\int_{0}^{R}\frac{1}{{\rm log}(R/r)}\left(\frac{d}{dr}\int_{\wp}|f(\delta_{r}(y))-f(\delta_{R}(y))|^{2}d\sigma(y)\right)dr$$
	$$=-2\int_{0}^{R}\frac{1}{{\rm log}(R/r)}{\rm Re}\int_{\wp}(f(\delta_{r}(y))-f(\delta_{R}(y))) \overline{
		\frac{d f(\delta_{r}(y))}{dr} }d\sigma(y)dr.$$
	Here we have used the fact
	$${\rm log}(R/r)={\rm log}\left(1+\left(\frac{R}{r}-1\right)\right)\geq \frac{R}{r}-1=\frac{R-r}{r}.$$
	$$|f(\delta_{r}(y))-f(\delta_{R}(y))|^{2}\leq C|R-r|^{2}.$$
	Using Schwarz's inequality we obtain
	$$
	\int_{B(0,R)}\frac{1}{|x|^{2}|{\rm log}(R/|x|)|^{2}}\left|f(x)-f\left(\frac{\delta_{R}(x)}{|x|}\right)\right|^{2}dx$$
	$$\leq2\left(\int_{0}^{R}\int_{\wp}\frac{1}{r\left({\rm log}(R/r)\right)^{2}}|f(\delta_{r}(y))-f(\delta_{R}(y))|^{2}d\sigma(y)dr\right)^{\frac{1}{2}}$$
	$$\cdot\left(\int_{0}^{R}\int_{\wp}\left|\frac{d f(\delta_{r}(y))}{dr} \right|^{2}rd\sigma(y)dr\right)^{\frac{1}{2}}$$
	$$=2\left(\int_{B(0,R)}\frac{1}{|x|^{2}|{\rm log}(R/|x|)|^{2}}\left|f(x)-f\left(\frac{\delta_{R}(x)}{|x|}\right)\right|^{2}dx\right)^{\frac{1}{2}}
	\left(\int_{B(0,R)}|\mathcal{R}f|^{2}dx\right)^{\frac{1}{2}}.$$
	This proves \eqref{thm2-1}. To prove \eqref{thm2-2}
	we notice
	\begin{multline}\label{p2_1}
	\left(\int_{B(0,R)}\frac{1}{|x|^{2}\left(1+|{\rm log}(R/|x|)|\right)^{2}}|f(x)|^{2}dx\right)^{\frac{1}{2}}
	\\ \leq \left(\int_{B(0,R)}\frac{1}{|x|^{2}\left(1+|{\rm log}(R/|x|)|\right)^{2}}\left|f(x)-f\left(\frac{\delta_{R}(x)}{|x|}\right)\right|^{2}dx\right)^{\frac{1}{2}}
	\\+\left(\int_{B(0,R)}\frac{1}{|x|^{2}\left(1+|{\rm log}(R/|x|)|\right)^{2}}\left|f\left(\frac{\delta_{R}(x)}{|x|}\right)\right|^{2}dx\right)^{\frac{1}{2}}.
	\end{multline}
	
	On the other hand, we have
	
	$$\left(\int_{B(0,R)}\frac{1}{|x|^{2}\left(1+|{\rm log}(R/|x|)|\right)^{2}}\left|f\left(\frac{\delta_{R}(x)}{|x|}\right)\right|^{2}dx\right)^{\frac{1}{2}}
	$$$$=\left(\int_{0}^{R}\int_{\wp}\frac{1}{r\left(1+|{\rm log}(R/r)|\right)^{2}}|f(\delta_{R}(y))|^{2}d\sigma(y)dr\right)^{\frac{1}{2}}$$
	$$=\left(\int_{0}^{R}\frac{1}{r\left(1+|{\rm log}(R/r)|\right)^{2}}dr\int_{\wp}|f(\delta_{R}(y))|^{2}d\sigma(y)\right)^{\frac{1}{2}}$$
	$$=\left(\frac{1}{1+|{\rm log}(R/r)|}\Bigg|_{0}^{R}\int_{\wp}|f(\delta_{R}(y))|^{2}d\sigma(y)\right)^{\frac{1}{2}}$$
	$$=\left(\int_{\wp}|f(\delta_{R}(y))|^{2}d\sigma(y)
	\right)^{\frac{1}{2}}=\left(\frac{2}{R^{2}}
	\int_{0}^{R}\int_{\wp}|f(\delta_{R}(y))|^{2}r
	d\sigma(y)dr\right)^{\frac{1}{2}}$$
	$$=\left(\frac{2}{R^{2}}\right)^{\frac{1}{2}}
	\left(\int_{B(0,R)}\left|f\left(\frac{\delta_{R}(x)}{|x|}
	\right)\right|^{2}dx\right)^{\frac{1}{2}}$$
	$$\leq \frac{\sqrt{2}}{R}\left(\left(\int_{B(0,R)}\left|f\left(\frac{\delta_{R}(x)}{|x|}\right)-f(x)\right|^{2}dx\right)^{\frac{1}{2}}
	+\left(\int_{B(0,R)}|f(x)|^{2}dx\right)^{\frac{1}{2}}\right)$$
	$$\leq \sqrt{2}
	\left(\int_{B(0,R)}\frac{1}{|x|^{2}\left(1+|{\rm log}(R/|x|)|\right)^{2}}\left|f\left(\frac{\delta_{R}(x)}{|x|}\right)-f(x)\right|^{2}dx\right)^{\frac{1}{2}}
	$$$$+\frac{\sqrt{2}}{R}\left(\int_{B(0,R)}|f(x)|^{2}dx\right)^{\frac{1}{2}},$$
	where we have used the fact
	$$\frac{1}{R^{2}}\leq\frac{1}{r^{2}(1+{\rm log}(R/r))^{2}},\,r\in (0,R).$$
	That is
	\begin{multline}\label{p2_2}
	\left(\int_{B(0,R)}\frac{1}{|x|^{2}\left(1+|{\rm log}(R/|x|)|\right)^{2}}\left|f\left(\frac{\delta_{R}(x)}{|x|}\right)\right|^{2}dx\right)^{\frac{1}{2}}
	\\ \leq \sqrt{2}
	\left(\int_{B(0,R)}\frac{1}{|x|^{2}\left(1+|{\rm log}(R/|x|)|\right)^{2}}\left|f\left(\frac{\delta_{R}(x)}{|x|}\right)-f(x)\right|^{2}dx\right)^{\frac{1}{2}}
	\\ +\frac{\sqrt{2}}{R}\left(\int_{B(0,R)}|f(x)|^{2}dx\right)^{\frac{1}{2}}.
	\end{multline}
	Combining \eqref{p2_2} with  \eqref{p2_1} we arrive at
	\begin{multline}
	\left(\int_{B(0,R)}\frac{1}{|x|^{2}\left(1+|{\rm log}(R/|x|)|\right)^{2}}|f(x)|^{2}dx\right)^{\frac{1}{2}}
	\\ \leq (1+\sqrt{2})\left(\int_{B(0,R)}\frac{1}{|x|^{2}\left(1+|{\rm log}(R/|x|)|\right)^{2}}\left|f(x)-f\left(\frac{\delta_{R}(x)}{|x|}\right)\right|^{2}dx\right)^{\frac{1}{2}}
	\\ +\frac{\sqrt{2}}{R}\left(\int_{B(0,R)}|f(x)|^{2}dx\right)^{\frac{1}{2}}.
	\end{multline}
	Finally, using \eqref{thm2-1} we obtain  \eqref{thm2-2}.
	
\end{proof}

\begin{thebibliography}{MOW15b}

\bibitem[AS06]{Adimurthi-Sekar}
Adimurthi and A.~Sekar.
\newblock Role of the fundamental solution in {H}ardy-{S}obolev-type
  inequalities.
\newblock {\em Proc. Roy. Soc. Edinburgh Sect. A}, 136(6):1111--1130, 2006.

\bibitem[BEL15]{BEL}
A.~A. Balinsky, W.~D. Evans, and R.~T. Lewis.
\newblock {\em The analysis and geometry of {H}ardy's inequality}.
\newblock Universitext. Springer, Cham, 2015.

\bibitem[BL85]{Brez1}
H. Brezis and E. Lieb.
\newblock Sobolev inequalities with remainder terms.
\newblock {\em J. Funct. Anal.}, 62:73--86, 1985.

\bibitem[BM97]{Brez2}
H. Brezis and M. Marcus.
\newblock Hardy's inequalities revisited.
\newblock {\em Ann. Scuola
	Norm. Sup. Pisa Cl. Sci.}, 25(4):217--237, 1997.

\bibitem[BN83]{Brez3}
H. Br\'ezis and L. Nirenberg.
Positive solutions of nonlinear elliptic equations involving critical Sobolev exponents. 
{\em Comm. Pure Appl. Math.}, 36(4): 437--477, 1983. 

\bibitem[BV97]{Brez4}
H. Brezis and J. V\'azquez. 
Blow-up solutions of some nonlinear elliptic problems. 
{\em Rev. Mat. Univ. Complut. Madrid},  10(2): 443--469, 1997.

\bibitem[CCR15]{Ciatti-Cowling-Ricci}
P.~Ciatti, M.~G. Cowling, and F.~Ricci.
\newblock Hardy and uncertainty inequalities on stratified {L}ie groups.
\newblock {\em Adv. Math.}, 277:365--387, 2015.


\bibitem[CF02]{Cianchi-Fusco}
A.~Cianchi, N.~Fusco.
\newblock Functions of bounded variation and rearrangements.
\newblock {\em Archive for Rational Mechanics and Analysis}, 165(1):1--40, 2002.

\bibitem[CF08]{Cianchi-Ferone08}
A.~Cianchi, N.~Ferone.  
\newblock Hardy inequalities with non-standard remainder terms.
\newblock {\em Annales de l'Institut Henri Poincar{\'e}. Analyse Non Lin{\'e}aire}, 25(5):889--906, 2008.

\bibitem[CF08]{Cianchi-Ferone08}
A.~Cianchi, N.~Ferone.  
\newblock Best remainder norms in Sobolev-Hardy inequalities.
\newblock {\em Indiana University Mathematics Journal}, 58(3):1051-- 1096, 2009.

\bibitem[D'A05]{DAmbrosio-Hardy}
L.~D'Ambrosio.
\newblock Hardy-type inequalities related to degenerate elliptic differential
  operators.
\newblock {\em Ann. Sc. Norm. Super. Pisa Cl. Sci. (5)}, 4(3):451--486, 2005.

\bibitem[Dav99]{Davies}
E.~B. Davies.
\newblock A review of {H}ardy inequalities.
\newblock In {\em The {M}az'ya anniversary collection, {V}ol. 2 ({R}ostock,
  1998)}, volume 110 of {\em Oper. Theory Adv. Appl.}, pages 55--67.
  Birkh\"auser, Basel, 1999.

\bibitem[DGP11]{DGP-Hardy-potanal}
D.~Danielli, N.~Garofalo, and N.~C. Phuc.
\newblock Hardy-{S}obolev type inequalities with sharp constants in
  {C}arnot-{C}arath{\'e}odory spaces.
\newblock {\em Potential Anal.}, 34(3):223--242, 2011.

\bibitem[Dye70]{Dyer-1970}
J.~L. Dyer.
\newblock A nilpotent {L}ie algebra with nilpotent automorphism group.
\newblock {\em Bull. Amer. Math. Soc.}, 76:52--56, 1970.

\bibitem[ET99]{ET-1999}
D.~E. Edmunds and H.~Triebel.
\newblock Sharp {S}obolev embeddings and related {H}ardy inequalities: the
  critical case.
\newblock {\em Math. Nachr.}, 207:79--92, 1999.

\bibitem[Fol75]{Folland-FS}
G.~B. Folland.
\newblock Subelliptic estimates and function spaces on nilpotent {L}ie groups.
\newblock {\em Ark. Mat.}, 13(2):161--207, 1975.

\bibitem[FR16]{FR}
V.~Fischer and M.~Ruzhansky.
\newblock {\em Quantization on nilpotent Lie groups}, volume 314 of {\em
  Progress in Mathematics}.
\newblock Birkh\"auser, 2016. (open access book)

\bibitem[FS82]{FS-Hardy}
G.~B. Folland and E.~M. Stein.
\newblock {\em Hardy spaces on homogeneous groups}, volume~28 of {\em
  Mathematical Notes}.
\newblock Princeton University Press, Princeton, N.J.; University of Tokyo
  Press, Tokyo, 1982.

\bibitem[GK08]{GolKom}
J.~A. Goldstein and I.~Kombe.
\newblock The {H}ardy inequality and nonlinear parabolic equations on {C}arnot
  groups.
\newblock {\em Nonlinear Anal.}, 69(12):4643--4653, 2008.

\bibitem[GL90]{GL}
N.~Garofalo and E.~Lanconelli.
\newblock Frequency functions on the {H}eisenberg group, the uncertainty
  principle and unique continuation.
\newblock {\em Ann. Inst. Fourier (Grenoble)}, 40(2):313--356, 1990.

\bibitem[Har20]{Hardy:1920}
G.~H. Hardy.
\newblock Note on a theorem of {H}ilbert.
\newblock {\em Math. Z.}, 6(3-4):314--317, 1920.

\bibitem[II15]{II}
N.~Ioku and M.~Ishiwata.
\newblock A scale invariant form of a critical {H}ardy inequality.
\newblock {\em Int. Math. Res. Not. IMRN}, (18):8830--8846, 2015.

\bibitem[IIO16]{IIO}
N.~Ioku, M.~Ishiwata, and T.~Ozawa.
\newblock Sharp remainder of a critical {H}ardy inequality.
\newblock {\em Arch. Math. (Basel)}, 106(1):65--71, 2016.

\bibitem[JS11]{Jin-Shen:Hardy-Rellich-AM-2011}
Y.~Jin and S.~Shen.
\newblock Weighted {H}ardy and {R}ellich inequality on {C}arnot groups.
\newblock {\em Arch. Math. (Basel)}, 96(3):263--271, 2011.

\bibitem[Lia13]{Lian:Rellich}
B.~Lian.
\newblock Some sharp {R}ellich type inequalities on nilpotent groups and
  application.
\newblock {\em Acta Math. Sci. Ser. B Engl. Ed.}, 33(1):59--74, 2013.

\bibitem[MOW13]{MOW:Tohoku}
S.~Machihara, T.~Ozawa, and H.~Wadade.
\newblock {H}ardy type inequalities on balls.
\newblock {\em Tohoku Math. J.}, 65:321--330, 2013.

\bibitem[MOW15a]{MOW:Hardy-Hayashi}
S.~Machihara, T.~Ozawa, and H.~Wadade.
\newblock On the {H}ardy type inequalities.
\newblock {\em preprint}, 2015.

\bibitem[MOW15b]{MOW:Sobolev-Lorenz-Zygmund}
S.~Machihara, T.~Ozawa, and H.~Wadade.
\newblock Scaling invariant {H}ardy inequalities of multiple logarithmic type
  on the whole space.
\newblock {\em J. Inequal. Appl.}, pages 2015:281, 13, 2015.

\bibitem[NZW01]{NZW-Hardy-p}
P.~Niu, H.~Zhang, and Y.~Wang.
\newblock Hardy type and {R}ellich type inequalities on the {H}eisenberg group.
\newblock {\em Proc. Amer. Math. Soc.}, 129(12):3623--3630, 2001.

\bibitem[RS15]{Ruzhansky-Suragan:Layers}
M.~Ruzhansky and D.~Suragan.
\newblock Layer potentials, {K}ac's problem, and refined {H}ardy inequality on
  homogeneous {C}arnot groups.
\newblock {\em arXiv:1512.02547}, 2015.

\bibitem[RS16a]{Ruzhansky-Suragan:Hardy-hom-Lp}
M.~Ruzhansky and D.~Suragan.
\newblock Hardy and Rellich inequalities, identities, and sharp remainders on homogeneous groups.
\newblock {\em arXiv:1603.06239}, 2016.

\bibitem[RS16b]{Ruzhansky-Suragan:L2-CKN}
M.~Ruzhansky and D.~Suragan.
\newblock Anisotropic $L^{2}$-weighted Hardy and $L^{2}$-Caffarelli-Kohn-Nirenberg inequalities.
\newblock {\em Commun. Contemp. Math.}, to appear, 2016. {\em arXiv:1610.07032}

\bibitem[RS16c]{Ruzhansky-Suragan:horizontal}
M.~Ruzhansky and D.~Suragan.
\newblock On horizontal Hardy, Rellich, Caffarelli-Kohn-Nirenberg and $p$-sub-Laplacian inequalities on stratified groups.
\newblock {\em J. Differential Equations}, to appear, 2016. {\em arXiv:1605.06389}

\bibitem[RS16d]{Ruzhansky-Suragan:squares}
M.~Ruzhansky and D.~Suragan.
\newblock Local Hardy and Rellich inequalities for sums of squares of vector fields.
{\em Adv. Differ. Equations}, to appear, 2016.
\newblock {\em arXiv:1601.06157}.

\end{thebibliography}

\end{document}